\newtheorem{corr}{Corollary}
\newtheorem{lem}{Lemma}
\newtheorem{df}{Definition}
\newtheorem{remarks}{Remark}
\newtheorem{example}{Example}
\newtheorem{thm}{Theorem}
\newcommand{\C}{\nu_H}
\newcommand{\RM}{\mathbb{R}}
\newcommand{\AM}{{\bf A}}
\newcommand{\MM}{\,\mbox{\bf M}}
\newcommand{\cof}{\operatorname{cof}}
\newcommand{\rank}{\operatorname{rank}}
\title{A geometric method for eigenvalue problems with low rank perturbations}
\author{Thomas J. Anastasio\thanks{Department of Molecular and Integrative Physiology and Beckman Institute,
University of Illinois Urbana-Champaign, Urbana, 
IL 61820.}
\and Andrea K. Barreiro\thanks{ Department of Mathematics,
Southern Methodist University, Box 750156, Dallas, TX 75275} 
\and Jared C. Bronski\thanks{Department of Mathematics,
University of Illinois Urbana-Champaign, 1409 W. Green St., Urbana, 
IL 61801.}}
\begin{document}
\maketitle

\begin{abstract}We consider the problem of finding the spectrum 
of an operator taking the form of a low-rank (rank one or two) non-normal 
perturbation of a well-understood operator, motivated by a number of problems of applied interest which take this form. 
We use the fact that the system is a low rank
perturbation of a solved problem, together with a 
simple idea of classical differential geometry (the envelope of a family of 
curves) to completely analyze the spectrum.   
We use these techniques to analyze three problems of this form: a 
model of the oculomotor integrator due to Anastasio and Gad\cite{ag07}, a continuum integrator model, and a nonlocal model of phase separation due to Rubinstein and Sternberg\cite{RS92}. 

\end{abstract}

\noindent
\textbf{Keywords:} Bifurcation theory, Aronszajn-Krein formula, Rank one perturbations



\section*{Introduction}

In this paper we analyze eigenvalue problems of the following form
\begin{equation}
\widetilde{\bf M}\vec w = {\bf M} \vec w + \rho_1 \vec f_1 \langle \vec g_1, \vec w\rangle + \rho_2 \vec f_2 \langle \vec g_2, \vec w\rangle = \lambda \vec w 
\label{eqn:Rank1}
\end{equation}
with $\rho_1$ and $\rho_2$ parameters, $\vec f_i, \vec g_i$ fixed vectors 
 and ${\bf M}$ is an operator with known spectrum. While Eqn. \eqref{eqn:Rank1} might appear very specific, we are aware of a number of interesting eigenvalue 
problems which take this form. These include:
\begin{itemize}
\item A model due to Anastasio and Gad of the behavior of the oculomotor 
integrator\cite{ag07}.
\item A non-local Allen-Cahn model due to Rubinstein and Sternberg for phase separation\cite{RS92}.
\item The stability problem for spike solutions to activator-inhibitor models 
in the limit of slow activator diffusion\cite{F94,IW02}.
\item Stability for models of runaway ohmic heating\cite{C81,L95I,L95II} and  microwave heating \cite{BK98}.
\item Stability for stationary solutions of model for phytoplankton growth.\cite{DH10}
\end{itemize}
Four of the models take the form of a reaction-diffusion equation with a 
nonlocal term. The study of the stability of stationary solutions of such 
models naturally leads to an eigenvalue problem which takes the form of a 
self-adjoint Sturm-Liouville operator plus a finite rank perturbation
coming from the nonlocal term.   
Freitas\cite{F94} has considered a similar problem and has 
some related results for a single perturbation (rather than a two parameter 
family); Bose and Kriegsmann\cite{BK98} have some other related results, 
mainly in the case where $\vec f_i = \vec g_i$ where the problem is
self-adjoint.
We refer the interested reader to the review paper of
Freitas\cite{F99}, which details a number of models whose stability
problems take the form of a ``nice'' operator with a low rank
(typically rank one) perturbation.

In all of these problems the eigenvalue problem arises in the study of the 
stability of a particular steady state. In this situation one is typically interested in 
understanding qualitative properties of the spectrum as a function of the parameters $(\rho_1, \rho_2)$. 
In particular one might wish to understand, for any particular pair $(\rho_1, \rho_2)$,
\begin{itemize}
 \item how many eigenvalues are in the right half-plane, and
\item  how many eigenvalues are real (vs. complex). 
\end{itemize}
Here we give a direct way 
to construct a phase-diagram in the $(\rho_1,\rho_2)$ plane which answers
these questions. 

The main approach that we take here is to exploit the low rank nature of the perturbations,  
along with some geometric constructions for the quantities of interest. Since the eigenvalues will vary continuously as a function of the parameters $(\rho_1, \rho_2)$, the quantities noted above are constant on open sets, with the boundary of these sets being respectively 
\begin{itemize}
 \item the set of $(\rho_1, \rho_2)$ for which $\widetilde {\bf M}$ has a purely complex eigenvalue $\lambda = \imath \omega$ (including $\lambda = 0$), and
\item  the set of $(\rho_1, \rho_2)$ for which $\widetilde {\bf M}$ has a double real eigenvalue. 
\end{itemize}
Knowledge of these boundary sets, together with knowledge of the spectrum of the unperturbed operator ${\bf M}$, would therefore enable us to study stability in the entire plane. 

In passing we note that, while the unperturbed operator ${\bf M}$ is self-adjoint in some of the examples presented here, self-adjointness is not strictly necessary. What \textit{is} necessary is that the spectrum of the unperturbed operator ${\bf M}$ is known (at least qualitatively), so that we have a baseline with which to compare the perturbed operator (much as a constant of integration fixes a particular solution of a differential equation). In each example studied here, the unperturbed operator ${\bf M}$ will have a purely real spectrum.

Our main result in this paper will be to identify -- and gave a recipe for computing -- a set of geometric quantities associated with the spectrum of a rank-two perturbation of a well-known operator, which can be used to 
analyze the perturbed operator in the entire plane: this is done in \S \ref{sec:basic_calc}. We will then apply our technique to 
three specific problems which can be written in the form of Eqn. \eqref{eqn:Rank1}. The first is a model of a coupled brainstem-cerebellum neuronal network called the \textit{oculomotor integrator} (\S \ref{sec:numerics}); the second is a continuum version of that model, in which the (relatively numerous) brainstem neurons are replaced by a neural ``line" (\S \ref{sec:continuum}).  Finally, we analyze a stability problem that arises in a nonlocal reaction-diffusion equation \cite{RS92} (\S \ref{sec:RS_model}). In this last problem, we also use an intermediate result from \S 1 (the Aronszajn-Krein formula and its consequences) to prove a new theorem about stability of stationary solutions.

\section{Basic Calculations}  \label{sec:basic_calc}

We begin with some general dimension-counting arguments. The 
matrix\footnote{For purposes 
of exposition we will consider the case where $\widetilde{\bf M}$ is a matrix, but 
everything we say will apply equally to the case where $\widetilde{\bf M}$ is an 
operator of compact resolvent.} $\widetilde{\bf M}$ is assumed to 
be a real $N\times N$ matrix. Real non-symmetric matrices will generically have a real eigenvalue of 
multiplicity higher than one on a set of codimension one. In a two-parameter 
model such as we are considering here this codimension one set divides the 
parameter space into open sets having a constant number of real eigenvalues. 
As one crosses this set the number of real eigenvalues changes by (generically)
two. This motivates the following definition:

\begin{df}
We define the bifurcation curve to be the locus of points ${\mathcal V} = (\rho_1,\rho_2)$ for which  $\widetilde {\bf M}$ has a real eigenvalue of multiplicity 
two or higher.  
\end{df}

For matrix problems, of course, there exists an algebraic procedure for determining the values in the $(\rho_1,\rho_2)$ plane where the matrix has multiple eigenvalues. One can simply compute the discriminant (in $\lambda$)  
of the characteristic polynomial of the matrix $\widetilde {\bf M},$
\[
{\rm disc}_{\lambda}(\det(\widetilde{\bf M} - \lambda {\bf I})) = P(\rho_1,\rho_2),
\]
which gives a polynomial in the parameters $(\rho_1,\rho_2)$. The variety defined by the zero set of this polynomial 
\[
{\mathcal V} = \{ (\rho_1,\rho_2) | P(\rho_1,\rho_2) = 0 \}
\]
determines the bifurcation curve. Unfortunately, this computation isn't 
practical to carry out analytically for real problems: for a large 
matrix, $P(\rho_1,\rho_2)$ will be a polynomial of large degree and the zero 
set will be difficult to compute. For the case of operators, even very nice 
ones, it is not clear that the discriminant makes sense at all. 

Instead we use the fact that the 
perturbations are of finite rank to give an explicit 
rational or algebraic parameterization of the bifurcation curve.
We begin by stating a preliminary lemma, which is basically the Aronszajn-Krein 
formula for rank one perturbations:

\begin{lem}
Let $\widetilde{\MM}$ be an $N\times N$ matrix defined as in Equation (\ref{eqn:Rank1}). The characteristic polynomial of $\widetilde{\MM}$ 
\[
\widetilde D(\lambda) = \det(\widetilde{\MM} - \lambda{\bf I})
\]
takes the following form:
\begin{equation}
\det(\widetilde \MM - \lambda {\bf I})  = D(\lambda) + P_1(\lambda) \rho_1 + P_2(\lambda) \rho_2 + Q(\lambda) \rho_1 \rho_2
\label{eqn:disc}
\end{equation}
where $D(\lambda)= \det({\bf M} - \lambda {\bf I})$ 
is the determinant of the unperturbed problem, $P_i(\lambda)$ are polynomials 
of degree (at most) $(N-1)$ and $Q(\lambda)$ of degree (at most) $N-2$.
 In the case where $\vec g_1$ and $\vec g_2$ (or $\vec f_{1,2}$) are 
linearly dependent $Q(\lambda) = 0$. 
\end{lem}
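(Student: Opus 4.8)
The plan is to put the rank-two perturbation in factored form and apply the matrix determinant (Weinstein--Aronszajn) lemma. Set $F=[\,\vec f_1\ \vec f_2\,]$ and $G=[\,\vec g_1\ \vec g_2\,]$ for the $N\times 2$ matrices formed from the fixed vectors, and $R=\operatorname{diag}(\rho_1,\rho_2)$, so that $\widetilde{\MM}-\lambda{\bf I}=A+(FR)G^{\top}$ with $A:={\MM}-\lambda{\bf I}$. For every $\lambda\notin\operatorname{spec}({\MM})$ the matrix $A$ is invertible, and the matrix determinant lemma gives
\[
\det(\widetilde{\MM}-\lambda{\bf I})=D(\lambda)\,\det\!\big({\bf I}_2+\Sigma(\lambda)R\big),\qquad \Sigma(\lambda):=G^{\top}A^{-1}F ,
\]
a $2\times2$ determinant whose entries are $\sigma_{ij}(\lambda)=\langle\vec g_i,A^{-1}\vec f_j\rangle$. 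Expanding the $2\times2$ determinant of ${\bf I}_2+\Sigma R$ and multiplying through by $D(\lambda)$ yields exactly the claimed trilinear-in-$(\rho_1,\rho_2)$ form, with $P_i(\lambda)=D(\lambda)\sigma_{ii}(\lambda)=\langle\vec g_i,\operatorname{adj}(A)\,\vec f_i\rangle$ and $Q(\lambda)=D(\lambda)\big(\sigma_{11}\sigma_{22}-\sigma_{12}\sigma_{21}\big)=\det\!\big(G^{\top}\operatorname{adj}(A)F\big)/D(\lambda)$. Since the entries of $\operatorname{adj}(A)$ are $(N-1)\times(N-1)$ minors of $A$, hence polynomials in $\lambda$ of degree at most $N-1$, each $P_i$ has degree at most $N-1$; and because both sides above are polynomials in $\lambda$ that agree off the finite set $\operatorname{spec}({\MM})$, the identities extend to all $\lambda$.

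The step I expect to be the main obstacle is the assertion about $Q$: as written it is a priori only a \emph{rational} function of $\lambda$, so one must show that $D(\lambda)$ divides $\det(G^{\top}\operatorname{adj}(A)F)$ and that the quotient has degree at most $N-2$. I would obtain this by combining the Cauchy--Binet formula with Jacobi's theorem on minors of the adjugate:
\[
\det\!\big(G^{\top}\operatorname{adj}(A)F\big)=\sum_{|T|=|S|=2}\det\!\big(G_{T}\big)\,\det\!\big(\operatorname{adj}(A)_{T,S}\big)\,\det\!\big(F_{S}\big),
\]
where $G_{T}$ (resp. $F_{S}$) is the $2\times2$ submatrix of $G$ (resp. $F$) on row set $T$ (resp. $S$), and $\operatorname{adj}(A)_{T,S}$ is the $2\times2$ submatrix of $\operatorname{adj}(A)$ on rows $T$, columns $S$; Jacobi's identity then gives $\det\!\big(\operatorname{adj}(A)_{T,S}\big)=\pm\,D(\lambda)\,\det\!\big(A_{S^{c},T^{c}}\big)$, so the factor $D(\lambda)$ cancels and $Q(\lambda)$ becomes a signed sum of products of a $2\times2$ minor of $G$, a $2\times2$ minor of $F$, and an $(N-2)\times(N-2)$ minor of ${\MM}-\lambda{\bf I}$ — the last of which has degree at most $N-2$ in $\lambda$, giving the degree bound. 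An alternative that sidesteps the rational-function issue altogether is to expand $\det(\widetilde{\MM}-\lambda{\bf I})$ directly by multilinearity in its columns: column $j$ equals the $j$-th column of ${\MM}-\lambda{\bf I}$ plus $(\vec g_1)_j\,\rho_1\vec f_1+(\vec g_2)_j\,\rho_2\vec f_2$, any expansion term that substitutes copies of $\vec f_1$ into two or more columns vanishes (repeated column), and likewise for $\vec f_2$, so only the four term types $(\text{none})$, $(\vec f_1\text{ once})$, $(\vec f_2\text{ once})$, $(\vec f_1,\vec f_2\text{ once each})$ survive, producing respectively $D(\lambda)$, the $\rho_1$-term, the $\rho_2$-term, and the $\rho_1\rho_2$-term; a Laplace expansion along the substituted column(s) then exhibits the coefficients and their degrees.

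The degenerate case follows immediately from either route. If $\vec g_1,\vec g_2$ are linearly dependent then $\det(G_{T})\equiv0$ for every $T$, and if $\vec f_1,\vec f_2$ are dependent then $\det(F_{S})\equiv0$ for every $S$, so in either case the Cauchy--Binet sum for $\det(G^{\top}\operatorname{adj}(A)F)$ vanishes identically and $Q\equiv0$. Equivalently, in the multilinear picture, if $\vec g_2=c\,\vec g_1$ the perturbation contributed to column $j$ is always the scalar $(\vec g_1)_j$ times the single vector $\rho_1\vec f_1+c\rho_2\vec f_2$, so at most one column can carry a perturbation term and no $\rho_1\rho_2$ monomial can arise; symmetrically when $\vec f_2=c\,\vec f_1$. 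This completes the plan.
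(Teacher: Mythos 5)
Your proposal is correct, and your primary route differs from the paper's. The paper's proof works directly by multilinearity of the determinant in the columns of $\widetilde{\MM}-\lambda{\bf I}$: any expansion term carrying two copies of $\vec f_1$ (or of $\vec f_2$) has a repeated column and vanishes, which simultaneously yields the trilinear form, the degree bounds (the $\rho_1\rho_2$ coefficient uses only $N-2$ columns of $\MM-\lambda{\bf I}$), and the vanishing of $Q$ in the degenerate case — this is exactly the ``alternative'' you sketch in your second paragraph. Your main route instead factors the perturbation as $FRG^{\top}$ and invokes the block matrix determinant lemma, which is essentially what the paper's Appendix does (via two applications of the rank-one lemma plus Sherman--Morrison) to obtain the \emph{explicit} formulae \eqref{eqn:AKformula_explicit_Mat}; but you go further than the Appendix by noticing that this route leaves $Q=\det\bigl(G^{\top}\cof^{t}(\MM-\lambda{\bf I})F\bigr)/D(\lambda)$ as an a priori rational function, and you close that gap with Cauchy--Binet plus Jacobi's identity on minors of the adjugate, so that $D(\lambda)$ cancels and $Q$ is exhibited as a signed sum of $2\times2$ minors of $F$ and $G$ times $(N-2)\times(N-2)$ minors of $\MM-\lambda{\bf I}$. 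This is a genuine (and somewhat heavier) substitute for the paper's one-line wedge-product argument; what it buys is that the degeneracy statement and the degree bound for $Q$ fall out of a single closed formula, and the same Cauchy--Binet expansion specializes to the self-adjoint formulae \eqref{eqn:AKformula_explicit_SelfAdj} when one diagonalizes $\MM$. Both of your routes are complete and correct.
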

\begin{proof} 
Due to the rank two nature of the perturbation, the characteristic polynomial 
can contain no powers of $\rho_1$ or $\rho_2$ above the first. 
The easiest way to see this
is via multilinear algebra. The determinant is clearly polynomial in 
$\lambda,\rho_1,\rho_2$. A general term of the form $\rho_1^j\rho_2^k$ comes from 
the wedge product of $j$ factors of $\vec g_1$, $k$ factors of $\vec g_2$  and 
($N-(j+k)$) columns from  $\MM- \lambda {\bf I}$. Any term with more than 
one factor of $\vec g_1$ and one factor of $\vec g_2$ must be zero. Hence the determinant is of the form given in Eqn. \eqref{eqn:disc}.

Finally, if $\vec g_1$ and $\vec g_2$ are linearly dependent, then any wedge product including both $\vec g_1$ and $\vec g_2$ will vanish, so that $Q(\lambda) \equiv 0$.

The explicit form of the polynomials $P_i(\lambda), Q(\lambda)$ is 
easy to compute from the above construction (a detailed derivation is provided in the Appendix). If we let 
$\cof^t \left(\MM - \lambda {\bf I} \right)$ denote the transpose 
cofactor matrix of ${\bf M}- \lambda {\bf I}$ then one 
has the following formulae
\begin{align}
 P_1(\lambda) &= \langle\vec g_1 ,\cof^t \left(\MM - \lambda {\bf I} \right) \vec f_1 \rangle
 \nonumber \\
 P_2(\lambda) &= \langle\vec g_2 ,\cof^t \left(\MM - \lambda {\bf I} \right) \vec f_2 \rangle
  \label{eqn:AKformula_explicit_Mat}\\
Q(\lambda) &= \frac{1}{\det(\MM - \lambda {\bf I})} \left\vert\begin{array}{cc} \langle\vec g_1 ,\cof^t \left(\MM - \lambda {\bf I} \right) \vec f_1 \rangle & \langle\vec g_1 ,\cof^t \left( \MM - \lambda {\bf I} \right) \vec f_2 \rangle\\
\langle\vec g_2,\cof^t \left( \MM - \lambda {\bf I} \right) \vec f_1 \rangle & \langle\vec g_2, \cof^t \left(\MM - \lambda {\bf I} \right) \vec f_2 \rangle  \end{array}\right\vert 
\nonumber
\end{align}
If in addition $\MM$ is self-adjoint, we have alternative formulae, in which the polynomial form of $Q(\lambda)$, etc., is even more evident:
\begin{align}
 P_1(\lambda) &= \sum _i \langle \vec f_1,\vec \phi_i\rangle \langle \vec g_1, \vec \phi_i \rangle\prod_{j\neq i}(\lambda_j-\lambda) 
 \nonumber \\
 P_2(\lambda) &= \sum _i \langle \vec f_2,\vec \phi_i\rangle \langle \vec g_2, \vec \phi_i \rangle\prod_{j\neq i}(\lambda_j-\lambda) 
 \label{eqn:AKformula_explicit_SelfAdj}\\
Q(\lambda) 
&= \sum_{i,j} \left(\langle \vec f_1, \vec \phi_i\rangle \langle \vec g_1, \vec \phi_i\rangle \langle \vec f_2, \vec \phi_j\rangle \langle \vec g_2, \vec \phi_j\rangle - 
\langle \vec f_1, \vec \phi_i\rangle \langle \vec g_2, \vec \phi_i\rangle \langle \vec f_2, \vec \phi_j\rangle \langle \vec g_1, \vec \phi_j\rangle\right) \prod_{k\neq i,j} (\lambda_k - \lambda)
\nonumber
\end{align}
where $\lambda_i$ and $\vec \phi_i$ are the eigenvalues and eigenvectors of the unperturbed matrix ${\bf M}$.
\end{proof}

It is often more convenient to divide Eq.~\eqref{eqn:disc} by $\det(\MM - \lambda {\bf I})$ to put the eigenvalue condition  in the form 
\begin{eqnarray}
0 & = & 1+ \rho_1\langle \vec g_1, (\MM - \lambda {\bf I})^{-1} \vec f_1 \rangle + \rho_2 \langle \vec g_2, (\MM - \lambda {\bf I})^{-1} \vec f_2 \rangle \nonumber \\
& + & \rho_1 \rho_2 \left(\langle \vec g_1,  (\MM - \lambda {\bf I})^{-1} \vec f_1 \rangle  \langle \vec g_2, (\MM - \lambda {\bf I})^{-1} \vec f_2 \rangle - \langle \vec g_1, (\MM - \lambda {\bf I})^{-1} \vec f_2 \rangle  \langle \vec g_2, (\MM - \lambda {\bf I})^{-1} \vec f_1\rangle\right);   \label{eqn:const_evalue_wInv}
\end{eqnarray}
or even more simply, when $Q(\lambda) = 0$,
\[
0 = 1 + \rho_1\langle \vec g_1, (\MM - \lambda {\bf I})^{-1} \vec f_1 \rangle + \rho_2 \langle \vec g_2, (\MM - \lambda {\bf I})^{-1} \vec f_2 \rangle.
\]
This form has the advantage that it is expressed in terms of resolvents (i.e. $R_{\lambda} \equiv (\MM-\lambda {\bf I})^{-1}$), which are defined very generally for operators, rather than determinants and 
cofactors, which are not.

One geometric way to interpret this characteristic polynomial is as defining 
a one parameter family of rational curves, the curves of constant eigenvalue. 
For each value of $\lambda$, Eqn. \eqref{eqn:disc} defines a curve in the $(\rho_1,\rho_2)$ plane 
along which 
$\lambda$ is an eigenvalue. For instance the matrix has zero as an eigenvalue 
 along the curve 
\begin{align*}
& D(0) + \rho_1 P_1(0) + \rho_2 P_2(0) + \rho_1 \rho_2 Q(0) = 0 \\
\Rightarrow \; & \rho_2 = -\frac{D(0) + \rho_1 P_1(0)}{ P_2(0) + \rho_1  Q(0)}
\end{align*} 
in the $(\rho_1,\rho_2)$ plane. In the special case where $Q(\lambda) \equiv 0$, Eqn. \eqref{eqn:disc} defines a one-parameter family of lines. 

Given a family of curves it is always 
fruitful to consider its \textit{envelope}, 
which is the simultaneous solution to 
\begin{align}
& D(\lambda) + \rho_1 P_1(\lambda) + \rho_2 P_2(\lambda) + \rho_1 \rho_2 Q(\lambda) = 0 \label{eqn:const_evalue}\\
& D'(\lambda) + \rho_1 P_1'(\lambda) + \rho_2 P_2'(\lambda) + \rho_1 \rho_2 Q'(\lambda) = 0. \label{eqn:envelope}
\end{align}
Whereas each curve in the family encodes information on the location 
of a particular eigenvalue, the envelope of the curve encodes information 
on eigenvalue coincidence: this is the content of the next lemma. 
To simplify notation, we use the wedge notation for Wronskians: $f \wedge g = f g' - f' g$. 

\begin{lem}  \label{lem:env=bif} 
The solutions to (\ref{eqn:const_evalue}, \ref{eqn:envelope}) give the bifurcation curve ${\mathcal V}$.
If $Q(\lambda)$ is not identically zero the bifurcation curve $\mathcal V$ is generically given by the union of the pair of parametric curves given by
 \begin{eqnarray}
\rho_1 &=& \frac{-(P_1\wedge P_2 + D\wedge Q) \pm \sqrt{\left(P_1\wedge P_2 - D
\wedge Q\right)^2 - 4 (D\wedge P_1)(P_2\wedge Q)}}{2(P_1\wedge Q)} \\
\rho_2 &=& \frac{P_1\wedge P_2 - D\wedge Q \mp \sqrt{\left(P_1\wedge P_2 - 
D\wedge Q\right)^2 - 4 (D\wedge P_1)(P_2\wedge Q)}}{2(P_2\wedge Q)}.
\end{eqnarray}
If $Q(\lambda)$ is identically zero then the bifurcation curve is generically 
given by the parametric curve 
\begin{eqnarray}
\rho_1 &=& -\frac{P_2\wedge D(\lambda)}{P_1\wedge P_2(\lambda)} \label{eqn:env1}
\\
\rho_2 &=& \frac{P_1\wedge D(\lambda)}{P_1\wedge P_2(\lambda)}  \label{eqn:env2}
 \end{eqnarray}
\end{lem}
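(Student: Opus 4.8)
The plan is to separate the statement into a conceptual part --- that the envelope of the family \eqref{eqn:disc} coincides with the bifurcation curve $\mathcal V$ --- and a computational part --- solving the envelope system \eqref{eqn:const_evalue}--\eqref{eqn:envelope} explicitly for $(\rho_1,\rho_2)$ as functions of the parameter $\lambda$.

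For the conceptual part, the key remark is that, by the preceding lemma, the left-hand side of \eqref{eqn:const_evalue} is exactly the characteristic polynomial $\widetilde D(\lambda)=\det(\widetilde{\MM}-\lambda{\bf I})$ of the perturbed operator, regarded as a function $F(\rho_1,\rho_2,\lambda)$ of three variables (this is precisely \eqref{eqn:disc}). For fixed $(\rho_1,\rho_2)$ the coefficients of $\widetilde D$ as a polynomial in $\lambda$ depend on $\rho_1,\rho_2$ alone, so $\partial_\lambda F(\rho_1,\rho_2,\lambda)=\frac{d}{d\lambda}\widetilde D(\lambda)$, which is the left-hand side of \eqref{eqn:envelope}. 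Consequently a triple $(\rho_1,\rho_2,\lambda)$ with $\lambda$ real solves \eqref{eqn:const_evalue}--\eqref{eqn:envelope} if and only if $\lambda$ is a root of multiplicity at least two of the characteristic polynomial of $\widetilde{\MM}$; projecting the real solution set to the $(\rho_1,\rho_2)$ plane therefore recovers, by the very definition of $\mathcal V$, the bifurcation curve. The word ``generically'' is needed only because the classical envelope of a family of plane curves equals the discriminant locus of the family away from points where individual members are singular; for generic data the curves \eqref{eqn:disc} are smooth conics (or lines, when $Q\equiv 0$), this exceptional set is empty, and envelope $=$ discriminant locus $=\mathcal V$.

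For the computational part, when $Q\equiv 0$ the pair \eqref{eqn:const_evalue}--\eqref{eqn:envelope} is a linear $2\times 2$ system for $(\rho_1,\rho_2)$ whose coefficient matrix has rows $(P_1,P_2)$ and $(P_1',P_2')$ and whose right-hand side is $(-D,-D')$; Cramer's rule, with determinant $P_1\wedge P_2$, yields \eqref{eqn:env1}--\eqref{eqn:env2} at once (valid wherever $P_1\wedge P_2\neq 0$). When $Q\not\equiv 0$ the system is genuinely nonlinear, but two Wronskian combinations reduce it. Forming $Q'$ times~\eqref{eqn:const_evalue} minus $Q$ times~\eqref{eqn:envelope} annihilates the $\rho_1\rho_2$ term and gives the linear relation $(D\wedge Q)+\rho_1(P_1\wedge Q)+\rho_2(P_2\wedge Q)=0$, while forming $P_1'$ times~\eqref{eqn:const_evalue} minus $P_1$ times~\eqref{eqn:envelope} annihilates the $\rho_1$-linear term and gives $(D\wedge P_1)-\rho_2(P_1\wedge P_2)-\rho_1\rho_2(P_1\wedge Q)=0$. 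Multiplying the linear relation by $\rho_2$ to solve for $\rho_1\rho_2(P_1\wedge Q)$ and substituting into the second relation produces a quadratic in $\rho_2$ alone, namely $(P_2\wedge Q)\rho_2^2+\big((D\wedge Q)-(P_1\wedge P_2)\big)\rho_2+(D\wedge P_1)=0$; solving it gives the stated $\rho_2$, and back-substitution into the linear relation then gives the stated $\rho_1$, with the two sign choices correlated as displayed. (The symmetric appearance of the denominators $P_1\wedge Q$ and $P_2\wedge Q$ reflects the Grassmann--Pl\"ucker relation $(D\wedge P_1)(P_2\wedge Q)-(D\wedge P_2)(P_1\wedge Q)+(D\wedge Q)(P_1\wedge P_2)=0$, valid because any four of the ``vectors'' $(f,f')$ are linearly dependent in the plane; but the derivation above does not actually require it.)

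I do not expect a serious obstacle: the conceptual part is short once one notices $\widetilde D(\lambda)=F(\rho_1,\rho_2,\lambda)$, and the computational part is elimination by Wronskians followed by the quadratic formula. The main care is in bookkeeping the genericity hypotheses --- non-vanishing of the Wronskians $P_1\wedge P_2$, $P_1\wedge Q$, $P_2\wedge Q$ occurring in denominators, smoothness of the conics \eqref{eqn:disc} so that envelope and discriminant locus agree, and the restriction to \emph{real} $\lambda$ so that one captures real multiple eigenvalues, as required by the definition of $\mathcal V$, rather than complex ones, which lie on a different locus --- together with keeping the signs straight through the elimination.
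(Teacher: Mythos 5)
Your proposal is correct and follows essentially the same route as the paper: the authors likewise identify the envelope system with the condition that $\lambda$ be a (real) root of $\widetilde D$ of multiplicity at least two, and then "solve a linear and a quadratic equation" — your Wronskian elimination is exactly the computation they leave implicit, and your quadratic $(P_2\wedge Q)\rho_2^2+\bigl((D\wedge Q)-(P_1\wedge P_2)\bigr)\rho_2+(D\wedge P_1)=0$ reproduces their displayed roots with the correct sign correlation. One small caveat: with the paper's convention $f\wedge g=fg'-f'g$, Cramer's rule in the $Q\equiv 0$ case actually gives $\rho_1=+\tfrac{P_2\wedge D}{P_1\wedge P_2}$ and $\rho_2=-\tfrac{P_1\wedge D}{P_1\wedge P_2}$, i.e.\ the opposite signs from \eqref{eqn:env1}--\eqref{eqn:env2} as printed (the Cramer signs are the ones consistent with the paper's later use of these formulas in the continuum lemma, so this appears to be a sign typo in the statement rather than an error on your part, but your claim that Cramer yields the displayed formulas "at once" should be checked against the convention). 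Also note that the paper's proof spends most of its length classifying the non-generic cases (inconsistency, and the conditions ${\bf C1}$--${\bf C3}$ under which a whole curve of $(\rho_1,\rho_2)$ shares a multiple eigenvalue, producing the "singular pieces" used in Example 1); you only gesture at these via "generically," which suffices for the statement as phrased but omits the checkable resultant criteria the authors rely on later.
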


\begin{proof}
The equivalence of the envelope and the discriminant is standard -- see for instance Bruce and Giblin\cite{BG} or Spivak\cite{spivak}. Generically one just has to 
solve Equations (\ref{eqn:const_evalue}, \ref{eqn:envelope}) for $\rho_1,\rho_2$. This is equivalent to 
solving a linear and a quadratic equation in the general case and a pair of linear equations in the special case $Q(\lambda)=0.$ 

For certain values of $\lambda$ the system (\ref{eqn:const_evalue}, \ref{eqn:envelope}) maybe be inconsistent, or consistent but underdetermined. Inconsistency indicates that this eigenvalue cannot be achieved by any choice of $\rho_{1,2}$; this will occur if
\[  \rank\left(\begin{array}{ccc} P_1 & P_2 & Q\\ P_1^
\prime & P_2^\prime & Q^\prime \end{array}\right) < \rank\left(\begin{array}{cccc}D & P_1 & P_2 & Q\\ D^\prime& P_1^
\prime & P_2^\prime & Q^\prime \end{array}\right).
\]

If the system is consistent but underdetermined, then there may be a curve in the $(\rho_1,\rho_2)$ plane along which $\lambda$ is a multiple eigenvalue. 
The system (\ref{eqn:const_evalue}, \ref{eqn:envelope}) is consistent and underdetermined for $\lambda$ if one of the following conditions ${\bf C1,C2,C3}$ hold: 
\begin{eqnarray}
&{\bf C1} ~~~~~ \rank\left(\begin{array}{cccc}D & P_1 & P_2 & Q\\ D^\prime& P_1^
\prime & P_2^\prime & Q^\prime \end{array}\right)& < 2    \label{eqn:checkC1}\\ 
&{\bf C2} ~~~~~ \left\{ \begin{array}{c} P_1\wedge Q(\lambda) \neq 0 \\ P_2\wedge Q(\lambda) = 0 \\ D\wedge P_1(\lambda)=0 \\ P_1\wedge P_2(\lambda) = 
D\wedge Q(\lambda) \end{array}\right. & \\ 
&{\bf C3} ~~~~~ \left\{ \begin{array}{c} P_2\wedge Q(\lambda) \neq 0 \\ P_1\wedge Q(\lambda)= 0 \\ D\wedge P_2(\lambda)=0 \\ P_2\wedge P_1(\lambda)=D\wedge Q(\lambda) \end{array}\right.  &
\end{eqnarray}
If any of these genericity conditions are satisfied
for some value of $\lambda$ then (generically) there exists a curve in the 
$(\rho_1,\rho_2)$ plane along which that value 
of $\lambda$ is a multiple eigenvalue. 
If they are not satisfied for any $\lambda$, then the bifurcation curve is equal to the envelope curve.
Note that one can always check whether 
or not a pair of polynomials have a common root by computing the 
resultant of the polynomials: one need {\em not} be able to explicitly factor 
the polynomials to test this condition. Thus the genericity condition is 
readily checkable.
\end{proof}

\begin{lem}
As $(\rho_1,\rho_2)$ are varied so as to cross the envelope the number of real eigenvalues 
generically changes by two.  
\end{lem}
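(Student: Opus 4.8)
The plan is to reduce the claim to a local normal–form calculation for the characteristic function near a generic point of the envelope, together with a zero–counting argument. By Lemma~\ref{lem:env=bif} the envelope coincides with the bifurcation curve, so at a generic point $\rho^0=(\rho_1^0,\rho_2^0)$ of the envelope there is a \emph{real} value $\lambda_\ast$ that is an eigenvalue of $\widetilde{\bf M}$ of algebraic multiplicity exactly two: $\widetilde D(\lambda_\ast)=0$, $\partial_\lambda\widetilde D(\lambda_\ast)=0$, and $\partial_\lambda^2\widetilde D(\lambda_\ast)\neq 0$. The loci on which these hypotheses fail are lower–dimensional and are precisely what the word ``generically'' excludes: points carrying a triple eigenvalue (where $\partial_\lambda^2\widetilde D$ also vanishes), self–intersections of the envelope (two unrelated real double eigenvalues at once), and the finitely many points carrying a repeated non–real conjugate pair (that condition is four real equations in the four real unknowns $\rho_1,\rho_2,\operatorname{Re}\lambda,\operatorname{Im}\lambda$, hence isolated).

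First I would put the family into local normal form. By Eqn.~\eqref{eqn:disc} the function $\widetilde D(\lambda;\rho_1,\rho_2)$ is jointly polynomial, so since $\partial_\lambda^2\widetilde D(\lambda_\ast;\rho^0)\neq0$ the Implicit Function Theorem produces $\lambda=\Lambda(\rho)$ solving $\partial_\lambda\widetilde D(\lambda;\rho)=0$ near $\rho^0$, with $\Lambda(\rho^0)=\lambda_\ast$. Set $g(\rho):=\widetilde D(\Lambda(\rho);\rho)$; then $g(\rho^0)=0$ and, near $\rho^0$, the envelope is exactly $\{g=0\}$. Moreover $\nabla g(\rho^0)=\nabla_\rho\widetilde D(\lambda_\ast;\rho^0)=\bigl(P_1(\lambda_\ast)+\rho_2^0 Q(\lambda_\ast),\,P_2(\lambda_\ast)+\rho_1^0 Q(\lambda_\ast)\bigr)$, which is nonzero generically (its vanishing imposes two further conditions on $\lambda_\ast$), so $\{g=0\}$ is a smooth curve and a path that \emph{crosses} it does so transversally, i.e. $g$ changes sign along the path; this sign change is what ``crossing the envelope'' means.

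Next I would read off the eigenvalue count from the Taylor expansion
\[
\widetilde D(\lambda;\rho)=g(\rho)+\tfrac12\,\partial_\lambda^2\widetilde D\bigl(\Lambda(\rho);\rho\bigr)\bigl(\lambda-\Lambda(\rho)\bigr)^2+O\!\left(|\lambda-\Lambda(\rho)|^3\right),
\]
whose quadratic coefficient is nonzero and of constant sign, say positive, on a neighborhood of $\rho^0$. Fix a small closed disk $B\subset\CM$ about $\lambda_\ast$ with $\widetilde D(\cdot;\rho^0)\neq0$ on $\partial B$ and containing no other eigenvalue of $\widetilde{\bf M}$; by Rouch\'e's theorem $\widetilde D(\cdot;\rho)$ has exactly two zeros (with multiplicity) in $B$ for all $\rho$ near $\rho^0$, and since $\widetilde D$ has real coefficients these are either both real or a complex–conjugate pair. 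When $g(\rho)<0$ the quadratic model shows $\widetilde D(\cdot;\rho)$ is negative at its local minimum $\Lambda(\rho)$ and positive near $\partial B$, so the two zeros are real; when $g(\rho)>0$ there are no real zeros in $B$, hence a conjugate pair. Every other eigenvalue of $\widetilde{\bf M}$ is simple at $\rho^0$ and stays outside $B$, so it varies continuously and keeps its real–or–complex type as $\rho$ crosses the envelope. Hence the number of real eigenvalues changes by exactly two (by $-2$ or $+2$ according to the sign of the quadratic coefficient and the direction of crossing).

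The main obstacle will be the genericity bookkeeping of the first two paragraphs: one must be sure that at a generic envelope point the degeneracy is a single algebraically–double \emph{real} root with $\partial_\lambda^2\widetilde D\neq0$, that $\nabla g\neq0$ there, and that ``crossing'' is understood as a sign change of $g$ rather than a tangency. Given these, the Implicit Function Theorem step and the Rouch\'e count are routine, and the argument carries over verbatim to the operator case of the footnote, with $\widetilde D$ replaced by the characteristic function $1+\rho_1\langle\vec g_1,R_\lambda\vec f_1\rangle+\rho_2\langle\vec g_2,R_\lambda\vec f_2\rangle+\rho_1\rho_2(\cdots)$ of Eqn.~\eqref{eqn:const_evalue_wInv}, which is holomorphic in $\lambda$ off $\spec({\bf M})$.
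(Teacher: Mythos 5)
Your proposal is correct and follows essentially the same route as the paper: both reduce the claim to a local quadratic normal form in $\delta\lambda$ at a generic double real eigenvalue, with the sign of the linear term in $(\delta\rho_1,\delta\rho_2)$ (your $g(\rho)$, the paper's $\nabla_{\rho_1,\rho_2}F\cdot(\delta\rho_1,\delta\rho_2)$) deciding whether the pair is real or complex conjugate. The only difference is technical packaging --- you use the Implicit Function Theorem and a Rouch\'e count where the paper invokes the Weierstrass preparation theorem --- and your version makes explicit the localization and genericity bookkeeping that the paper leaves implicit.
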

\begin{proof}
We assume that most readers are familiar with this phenomenon from the
theory of first order quasilinear partial differential equations, where
the characteristic curves form a one parameter family of curves and
the envelope (caustic) marks the transition between regions which are
(typically) singly and triply covered by characteristics, but we give
a short proof.

The envelope curve is defined by the simultaneous solution to 
\begin{align*}
&F(\rho_1,\rho_2,\lambda)=0\\
&\frac{\partial F}{\partial \lambda} (\rho_1,\rho_2,\lambda) = 0.
\end{align*}
Assume that $(\rho_1^*,\rho_2^*)$ is a point along this curve with
corresponding eigenvalue $\lambda^*$, and that the gradient
$\nabla_{\rho_1,\rho_2} F(\rho_1^*,\rho_2^*,\lambda^*)$ and the second
derivative $\frac{\partial^2 F}{\partial
  \lambda^2}(\rho_1^*,\rho_2^*,\lambda^*)$ are
non-vanishing. Expanding in a neighborhood of this point, i.e. letting $\rho_1 =
\rho_1^* + \delta \rho_1, \rho_2 = \rho_2^* + \delta \rho_2$ and $\lambda =
\lambda^* + \delta \lambda$, we find the normal form 
\[
\frac12 \frac{\partial^2F}{\partial \lambda^2} (\delta \lambda)^2 +
\nabla_{\rho_1,\rho_2} F \cdot (\delta\rho_1,\delta\rho_2) = O(\delta \rho \delta \lambda, \delta\rho^2, \delta\lambda^3)
\] 
By the Weierstrauss preparation theorem, we can approximate solutions to $F(\rho_1, \rho_2, \lambda) = 0$ in a neighborhood of the point $(\rho_1^*,\rho_2^*)$ by fixing $(\delta \rho_1, \delta \rho_2)$ and solving for $\delta \lambda$; i.e. by solving the quadratic equation
\begin{eqnarray}
\frac12 \frac{\partial^2F}{\partial \lambda^2} (\delta \lambda)^2 + 
\nabla_{\rho_1,\rho_2} F \cdot (\delta\rho_1,\delta\rho_2)  & = & 0 \label{eqn:two_roots_nf}
\end{eqnarray}
for $\delta \lambda$. 
The tangent line to the envelope at $(\rho_1^*,\rho_2^*)$ is given by $\nabla_{\rho_1,\rho_2} F \cdot
 (\delta\rho_1,\delta\rho_2)=0$; on one side of this line, Eqn. \eqref{eqn:two_roots_nf} 
 has two distinct real roots $\pm \delta \lambda$, so each
 point lies on two constant eigenvalue curves, $\lambda^* \pm \delta \lambda$. 
 On the other side of the tangent line there are a pair of complex conjugate roots.
This general picture is illustrated in Figure (\ref{fig:bifurcation}), 
which shows a close-up of the envelope curve from an example to follow. 

For more details on the envelope see the text of Bruce and
Giblin\cite{BG}. 
\end{proof}

\noindent
\textbf{Figure 1 here}\\

We would next like to consider the possibility of eigenvalues of 
higher multiplicity. 
The envelope curve is, in general, a well-behaved curve and admits a 
parametrization by arc length. 
However this may fail at an isolated set of points in $(\rho_1,\rho_2)$. 
The next lemma says that (modulo some genericity assumptions) the following 
are all equivalent, and occur on a codimension 
two set (isolated points in the $(\rho_1,\rho_2)$ plane):
\begin{itemize}
\item Points in the $(\rho_1,\rho_2)$ plane where the model has a real eigenvalue
of multiplicity at least three.
\item Points where the tangent vector to the envelope curve vanishes.
\item Cusps in the envelope curve.
\end{itemize}

\begin{lem}
The vanishing of the tangent vector to the envelope curve at a point implies 
that $\widetilde {\bf M}$ has an eigenvalue of multiplicity (at least) three at that point.
The converse holds as long as
the following determinant is non-zero at the point in question:
\[
\left|\begin{array}{cc} P_1 + \rho_2 Q & P_2 + \rho_1 Q \\ P_1^\prime + \rho_2 Q
^\prime &P_2^\prime + \rho_1 Q^\prime \end{array}\right| \neq 0.
\]
Alternatively, the model has a triple eigenvalue if and only if either 
\begin{align*}
(P_1\wedge P_2 \wedge Q)(P_1 \wedge P_2 \wedge D) &= (D \wedge P_2 \wedge Q)(P_1
\wedge D \wedge Q) = 0\\
P_1\wedge P_2 \wedge Q &\neq 0  
\end{align*}
or 
\[
P_1\wedge P_2 \wedge Q = P_1 \wedge P_2 \wedge D = D \wedge P_2 \wedge Q=P_1\wedge D \wedge Q=0.
\]
\end{lem}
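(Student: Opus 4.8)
Here is how I would attack this.

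The plan is to package the characteristic polynomial as the single function $F(\rho_1,\rho_2,\lambda) = D(\lambda) + \rho_1 P_1(\lambda) + \rho_2 P_2(\lambda) + \rho_1\rho_2 Q(\lambda)$, which by the first lemma above equals $\det(\widetilde{\MM} - \lambda{\bf I})$. Then ``$\lambda$ is an eigenvalue of $\widetilde{\MM}$ of multiplicity at least $k$'' says exactly that $F,\,F_\lambda,\,\dots,\,\partial_\lambda^{k-1}F$ all vanish at $(\rho_1,\rho_2,\lambda)$, while the envelope is the locus $F = F_\lambda = 0$, which by Lemma~\ref{lem:env=bif} generically coincides with $\mathcal V$. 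Everything then reduces to a chain-rule computation of the tangent to the envelope followed by Cramer's rule.

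First I would parametrize the envelope by $\lambda$ and, writing $\rho_1 = \rho_1(\lambda)$, $\rho_2 = \rho_2(\lambda)$ along it, differentiate the two defining relations. Since $F_\lambda = 0$ on the envelope, differentiating $F = 0$ collapses to $F_{\rho_1}\rho_1' + F_{\rho_2}\rho_2' = 0$, and differentiating $F_\lambda = 0$ gives $F_{\rho_1\lambda}\rho_1' + F_{\rho_2\lambda}\rho_2' + F_{\lambda\lambda} = 0$. Because $F$ contains no $\rho_1^2$ or $\rho_2^2$ term, $F_{\rho_1} = P_1 + \rho_2 Q$, $F_{\rho_2} = P_2 + \rho_1 Q$, $F_{\rho_1\lambda} = P_1' + \rho_2 Q'$ and $F_{\rho_2\lambda} = P_2' + \rho_1 Q'$, so the coefficient matrix of this $2\times 2$ system for $(\rho_1',\rho_2')$ is precisely the matrix displayed in the statement; call its determinant $\Delta$. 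The forward implication costs nothing: if the tangent $(\rho_1',\rho_2')$ vanishes at a point of the envelope, the second equation forces $F_{\lambda\lambda} = 0$ there, so $F = F_\lambda = F_{\lambda\lambda} = 0$ and $\lambda$ is an eigenvalue of multiplicity at least three. For the converse, an eigenvalue of multiplicity at least three gives $F_{\lambda\lambda} = 0$, and provided $\Delta \neq 0$ Cramer's rule yields $(\rho_1',\rho_2') = (F_{\lambda\lambda}/\Delta)\,(F_{\rho_2},\,-F_{\rho_1}) = (0,0)$ -- exactly the stated converse. Under a further mild nondegeneracy assumption on the second-order behaviour of the parametrization, such a point is an ordinary cusp of the envelope (Bruce and Giblin \cite{BG}), which matches the cusp characterization listed just before the lemma.

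For the explicit ``alternatively'' conditions I would instead eliminate $(\rho_1,\rho_2)$ from $F = F_\lambda = F_{\lambda\lambda} = 0$ by the standard linearization: set $w = \rho_1\rho_2$, so the three equations become a linear system for $(\rho_1,\rho_2,w)$ whose coefficient matrix has columns $(P_1,P_1',P_1'')$, $(P_2,P_2',P_2'')$, $(Q,Q',Q'')$ and right-hand side $-(D,D',D'')$; its determinant is the third-order Wronskian $P_1\wedge P_2\wedge Q$. If $P_1\wedge P_2\wedge Q \neq 0$, Cramer writes $\rho_1$, $\rho_2$ and $\rho_1\rho_2$ as ratios of third-order Wronskians of triples from $\{D,P_1,P_2,Q\}$ over $P_1\wedge P_2\wedge Q$, and imposing the compatibility $\rho_1\cdot\rho_2 = \rho_1\rho_2$ and clearing denominators produces the first alternative (the relation binding $(P_1\wedge P_2\wedge Q)(P_1\wedge P_2\wedge D)$ to $(D\wedge P_2\wedge Q)(P_1\wedge D\wedge Q)$, together with $P_1\wedge P_2\wedge Q \neq 0$). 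If $P_1\wedge P_2\wedge Q = 0$, the system is rank-deficient; it is consistent -- and, since a generic line or plane of solutions meets the quadric $\{w = \rho_1\rho_2\}$, genuinely realizable by some $(\rho_1,\rho_2)$ -- exactly when the augmented $3\times 4$ matrix also drops rank, i.e.\ when all four of its $3\times 3$ minors $P_1\wedge P_2\wedge Q$, $P_1\wedge P_2\wedge D$, $P_1\wedge D\wedge Q$, $D\wedge P_2\wedge Q$ vanish; this is the second alternative.

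I expect the elimination step to be the real obstacle: the tangent computation and the Cramer step for the converse are routine, but the bookkeeping in the elimination -- fixing the signs of the four third-order Wronskian minors, verifying that the compatibility relation collapses to exactly the product form stated rather than to some unfactored scalar identity, and making precise the genericity needed so that the rank-deficient branch delivers an actual parameter pair $(\rho_1,\rho_2)$ and not merely a formal solution of the linearized system -- is where the work lies.
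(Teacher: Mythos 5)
Your proposal follows essentially the same route as the paper's own proof: differentiating the two envelope relations in $\lambda$ to get the $2\times2$ linear system for $(\rho_1',\rho_2')$ with the displayed coefficient matrix, reading off the forward implication from $F_{\lambda\lambda}=0$ and the converse from Cramer's rule, and then obtaining the ``alternatively'' conditions by treating $F=F_\lambda=F_{\lambda\lambda}=0$ as a linear system in $(\rho_1,\rho_2,\rho_1\rho_2)$ and imposing the consistency constraint (full rank giving the product relation, rank deficiency forcing all four third-order Wronskians to vanish). The argument is correct; your extra care about the rank-deficient branch actually meeting the quadric $w=\rho_1\rho_2$ is a point the paper itself leaves implicit.
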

\begin{proof}
The conditions for an eigenvalue of multiplicity (at least) two are given 
by (\ref{eqn:const_evalue},\ref{eqn:envelope}). Differentiating each of these with respect to $\lambda$ 
gives the following equations 
for $\rho_1^\prime,\rho_2^\prime$:
\begin{align*}
\rho_1^\prime(P_1(\lambda) + \rho_2Q(\lambda)) +  \rho_2^\prime(P_2(\lambda) + \rho_1Q(\lambda)) &= 0 \\
\rho_1^\prime(P_1^\prime(\lambda) + \rho_2Q^\prime(\lambda)) +  \rho_2^\prime(P^\prime_2(\lambda) + \rho_1Q^\prime(\lambda)) &= -(D^{\prime\prime}(\lambda) + \rho_1 P^{\prime\prime}_1(\lambda) +  \rho_2P^{\prime\prime}_2(\lambda) + \rho_1 \rho_2 Q^{\prime\prime})
\end{align*}
The conditions for an eigenvalue of multiplicity at least three are given by 
 (\ref{eqn:const_evalue},\ref{eqn:envelope}) together with the condition 
\begin{equation}
(D^{\prime\prime}(\lambda) + \rho_1 P^{\prime\prime}_1(\lambda) +  \rho_2 P^{\prime\prime}_2(\lambda) + \rho_1\rho_2Q^{\prime\prime}) = 0. \label{eqn:mult3}
\end{equation}
Thus it is clear that $\rho_1^\prime=0, \rho_2^\prime=0$ implies that the eigenvalue 
is of multiplicity (at least) three. Further if 
\[
\left|\begin{array}{cc} P_1 + \rho_2 Q & P_2 + \rho_1 Q \\ P_1^\prime + \rho_2 Q
^\prime &P_2^\prime + \rho_1 Q^\prime \end{array}\right| \neq 0
\]
then the above system can be solved uniquely for $\rho_1^\prime,\rho_2^\prime$ and
the existence of an eigenvalue of multiplicity three implies  $\rho_1^\prime=0, 
\rho_2^\prime=0.$ 

A bit more algebra gives another characterization 
of points where the eigenvalue has multiplicity three or higher. Equations 
(\ref{eqn:const_evalue},\ref{eqn:envelope},\ref{eqn:mult3}) form a system 
of three equations in three unknowns $\rho_1,\rho_2,$ and $\rho_3=\rho_1\rho_2$.
 Solving these three equations for $(\rho_1,\rho_2,\rho_3)$ and imposing the 
consistency condition $\rho_3 = \rho_1\rho_2$ 
shows that one has a root of multiplicity three if and only if either   
\begin{align*}
\left(P_1 \wedge P_2 \wedge Q\right) \left(P_1 \wedge P_2 \wedge D\right) &=
 \left(D \wedge P_2 \wedge Q\right) \left(P_1 \wedge D \wedge Q\right) \\
 P_1\wedge P_2 \wedge Q &\neq 0
\end{align*}
{\em or} all of the Wronskians
\begin{equation*}
P_1 \wedge P_2 \wedge Q=P_1 \wedge P_2 \wedge D=D \wedge P_2 \wedge Q=P_1 \wedge
 D \wedge Q=0
\end{equation*}
vanish. 
The first possibility is typically of codimension two - it is
expected to occur at isolated values of $\lambda$ corresponding to 
isolated values of $(\rho_1,\rho_2)$. The second does not typically 
happen at all, since it requires the simultaneous vanishing of 
several polynomials. However in Example (1) this case occurs because it 
is forced by a symmetry 
of the model.
   
Finally recall that a simple zero of the tangent vector represents a cusp, and 
generically an eigenvalue of multiplicity at least three will have 
multiplicity exactly three, so typically cusps in the envelope 
are equivalent to triple eigenvalues. 
\end{proof}

The geometry of a bifurcation in the neighborhood of a 
triple eigenvalue is illustrated in Figure \ref{fig:cn_imp}B. 
In a neighborhood of this 
point there are three dominant eigenvalues which participate in the bifurcation.
The cusp of the envelope represents a transition between a 
bifurcation between the intermediate and the  smallest eigenvalue in the trio, and 
a bifurcation between the intermediate and the largest eigenvalue in the trio. 
Emerging from the cusp-point is a curve that represents 
an exchange of dominance phenomenon, with a complex conjugate pair of eigenvalues crossing a single real one.  

When considering questions of stability and the behavior of the dominant 
eigenvalue it is also important to understand the behavior of the 
complex eigenvalues. In particular one would like to understand the locus of 
points at which the matrix has purely imaginary eigenvalues, as this 
curve indicates where the model loses stability due to a Hopf bifurcation. 

\begin{df}
The Hopf curve is the locus of points in the $(\rho_1,\rho_2)$ plane where
$\widetilde {\bf M}$ has a pair of purely imaginary eigenvalues. Generically 
this curve is given parametrically by 
\begin{align}
{\rm Re}(D(i \omega))+ \rho_1 {\rm Re}(P_1(i \omega)) + \rho_2 {\rm Re}(P_2(i \omega)) + \rho_1\rho_2 {\rm Re}(Q(i\omega))&=0 \\
{\rm Im}(D(i \omega))+ \rho_1 {\rm Im}(P_1(i \omega)) + \rho_2 {\rm Im}(P_2(i \omega)) + \rho_1\rho_2 {\rm Im}(Q(i\omega))&=0,     
\end{align}
where ${\rm Re,Im}$ represent the real and imaginary parts respectively. 
The genericity conditions are the same as in Lemma (\ref{lem:env=bif}) 
with the replacement of the Wronskians $f\wedge g$ by the 
quantities ${\rm Re}(f (i \omega)){\rm Im}(g (i\omega)) - {\rm Re}(g (i\omega)){
\rm Im}(f (i\omega)).$ 
\end{df}

The Hopf curve, the envelope, and the 
$\lambda = 0$ eigenvalue curve will all intersect at a single point, the 
point at which there is a zero eigenvalue of higher multiplicity.
We now present an illustrative example. 

\begin{example}
We consider the following model: 
\begin{equation*}
\MM = \left(\begin{array}{cccc} -2 & -1 & 0 & -\rho_1 \\ -1 & -2 & -\rho_2 & 0 \\ \sqrt{2} & 1 & -2 & 0 \\ 1 & \sqrt{2} & 0 & -2 \\ \end{array} \right).
\end{equation*}
It is straightforward to compute that the characteristic polynomial of this
 matrix is given by 
\begin{equation*}
\begin{split}
\det(\MM - \lambda {\bf I}) &= (1+\lambda)(2+\lambda)^2(3+\lambda) + 
\left(\lambda^2 + (4 - \sqrt{2})\lambda + (4 - 2 \sqrt{2})\right)\rho_1 \\
&+ \left(\lambda^2 + (4 - \sqrt{2})\lambda + (4 - 2 \sqrt{2})\right)\rho_2 
-\rho_1\rho_2 \end{split}
\end{equation*}
The zero eigenvalue curve is given by
\begin{align*}
12 + (4&  - 2 \sqrt{2}) \rho_1 + (4 - 2 \sqrt{2}) \rho_2 - \rho_1 \rho_2 =0 \\
&\rho_1 = \frac{12 + (4  - 2 \sqrt{2}) \rho_2 }{\rho_2 - (4 - 2 \sqrt{2})}
\end{align*}
 
The bifurcation curve is given by the envelope 
\begin{align*}
\rho_1 &= -\left( \lambda+2 \right)\left( \lambda + 2 +\frac{\sqrt{2}}{2}\right) \pm \left(\lambda+2\right)\sqrt{2 \left( \lambda+\frac{3}{2}\right)\left(\lambda+\frac{5}{2}\right)}\\
\rho_2 &= -\left(\lambda+2 \right)\left(\lambda + 2 +\frac{\sqrt{2}}{2}\right) \mp \left(\lambda+2\right)\sqrt{2 \left(\lambda+\frac{3}{2}\right)\left(\lambda+\frac{5}{2}\right)}
\end{align*}
together with the singular piece $\rho_1=-\frac{1}{2} \cup \rho_2=-\frac{1}{2}$,
which is associated to the value $\lambda=-2+\frac{\sqrt{2}}{2}$, where the 
equations defining the envelope fail to have full rank. 
The envelope and the singular piece of the bifurcation curve meet tangentially at $(\rho_2=-\frac{1}{2},\rho_1=-\frac{3}{2})$ and 
$(\rho_2=-\frac{1}{2},\rho_1=-\frac{3}{2})$. Because of the symmetry we have $P_1=P_2$ 
and thus $P_1\wedge P_2 \wedge Q \equiv 0$, so the condition for a 
triple eigenvalue reduces to simultaneous vanishing of $P_1\wedge D\wedge Q$ and
 $D \wedge P_2 \wedge Q.$ Note that since $P_1=P_2$ these are not independent ---
 $P_1\wedge D\wedge Q=-D \wedge P_2 \wedge Q.$ Calculating we find that the triple eigenvalue condition becomes 
\[
P_1\wedge D\wedge Q = -16 \lambda^3+(12 \sqrt{2}-96) \lambda^2+(48 \sqrt{2}-192)
 \lambda+(46 \sqrt{2}-128) = 0
\]
This cubic has three real roots: a double root at 
$\lambda=-2+\frac{\sqrt{2}}{2}$ and a
simple root at $\lambda = -(\frac{1}{2} + \frac{\sqrt{2}}{4})\approx-2.35.$ 
The envelope is not defined for $\lambda \in (-\frac{5}{2},-\frac{3}
{2}),$ so the root at $\lambda = -(\frac{1}{2} + \frac{\sqrt{2}}{4})$ 
does not correspond to a real multiple eigenvalue. 
Thus the only real eigenvalue of 
multiplicity higher than two is $\lambda=-2+\frac{\sqrt{2}}{2}.$ Since this 
eigenvalue is associated to the singular piece of the bifurcation curve we can 
potentially have many points where this is a triple eigenvalue. Along the
curve $\rho_1=-\frac{1}{2}$ the eigenvalues are 
\[
\lambda = -2 + \frac{\sqrt{2}}{2},-2 + \frac{\sqrt{2}}{2},-2 -\frac{\sqrt{2}}{2}
 \pm 
\frac{\sqrt{2-4 \rho_2}}{2}.
\]
So the only triple eigenvalue is at $\rho_2=-\frac{3}{2}$, the point of 
intersection with the envelope curve. A similar calculation holds along  $\rho_2
=-\frac{1}{2}.$

The Hopf curve is given parametrically by 
\begin{eqnarray}
&\rho_1 = (4+\sqrt{2}) \frac{(4 \omega^2-14) \pm \sqrt{30(18-8\sqrt{2} 
+ (1-2\sqrt{2})\omega^2 + \omega^4)}}{14} \\
&\rho_2 = (4+\sqrt{2}) \frac{(4 \omega^2-14) \mp \sqrt{30(18-8\sqrt{2} + 
(1-2\sqrt{2})\omega^2 + \omega^4)}}{14}, 
\end{eqnarray}
where, as always, the signs are {\em not} independent. Note that the argument
of the square root is strictly positive, so there exists purely imaginary eigenvalues corresponding to oscillations of any desired frequency.
The Hopf curve, the envelope, and the zero eigenvalue line all 
meet at the points $(\rho_1=(4+\sqrt{2})(-1 \pm \frac{\sqrt{30(18-8\sqrt{2})}}
{14})= 4+\sqrt{2} \pm \sqrt{30},\rho_2=
(4+\sqrt{2})(-1 \mp \frac{\sqrt{30(18-8\sqrt{2})}}{14})=-(4+\sqrt{2}) \mp 
\sqrt{30}.$
The most interesting region of the stability diagram is 
depicted in Figure (\ref{fig:StabDiag}). The zero eigenvalue 
curve is depicted in dashed red, the envelope in blue
(including a dot at the origin), the singular piece of 
the bifurcation curve in dot-dashed magenta, and the Hopf curve in solid 
dotted green. 

From this information it is easy to derive the stability diagram. 
At the origin the eigenvalues are $\lambda=-3,\lambda=-2,\lambda=-2,\lambda=-1$.
Since there is a degenerate eigenvalue one needs to do a local perturbation 
analysis 
near $\lambda=-2,\rho_1=0,\rho_2=0$ to determine if in the neighborhood of this 
point one has a real pair of eigenvalues or a complex conjugate 
pair. Letting $\lambda = -2 + \delta$ shows that near this point one has 
\[
\det({\bf M} - \lambda {\bf I}) = -\delta ^2 - \sqrt{2} \delta \rho_1 - \delta \sqrt{2} \rho_2 - \rho_1 \rho_2 + O(3), 
\]
where $O(3)$ denotes terms of order three or higher in $\delta,\rho_i.$
The discriminant of the above is $(\sqrt{2}\rho_1+\sqrt{2}\rho_2)^2 - 4\rho_1\rho_2 = 2\rho_1^2 + 2 \rho_2^2>0,$ indicating that in a neighborhood of the 
origin the double eigenvalue splits into a real (distinct) pair of eigenvalues. 
 Thus in the region containing the origin and bounded by the singular pieces 
of the bifurcation curve and the upper branch of the envelope (labelled ${\bf A}
$) 
there 
are four real eigenvalues in the left half-plane. As $\rho_2$ is decreased 
so as to cross the line $\rho_2=-\frac{1}{2}$ the first bifurcation 
occurs. Since this line corresponds to eigenvalue $\lambda=-2+\sqrt{2}$ and 
$-2 < -2+\frac{\sqrt{2}}{2} <-1$ 
the bifurcation consists of the two dominant real eigenvalues bifurcating to 
a complex conjugate pair. Thus in region ${\bf B}$ we have two real eigenvalues
and two complex eigenvalues, all in the left half-plane.  
As one leaves region {\bf B} across the Hopf curve into the region 
labelled ${\bf E}$ the complex conjugate pair moves into the right half-plane, 
giving two complex eigenvalues in the right half-plane and two real 
eigenvalues in the left half-plane. Proceeding in
this fashion the stability diagram can be labelled as follows:
\begin{itemize}
\item Region A: Four real eigenvalues in the left half-plane. 
\item Region B: Two real and two complex eigenvalues in the left half-plane. 
\item Region C: Four complex eigenvalues in the left half-plane. 
\item Region D: Two complex eigenvalues in right half-plane, two complex 
eigenvalues in the left half-plane.
\item Region E: Two complex eigenvalues in the right half-plane, two real 
eigenvalues in the left half-plane.
\item Region F: One real eigenvalue in the right half-plane, three real eigenvalues in the left half-plane.
\item Region G: One real eigenvalue in the right half-plane, one real and two complex eigenvalues in the left half-plane.  
\end{itemize}
\end{example}
Additionally there is a narrow region between the regions labelled
{\bf E} and {\bf F} (to the left of $\rho_2= -(4+\sqrt{2}) - \sqrt{30}\approx -1
0.9$ 
above the zero eigenvalue curve and below the envelope curve) where there 
are two real eigenvalues in the left half-plane and two real eigenvalues 
in the right half-plane. This region is not labelled since it is not 
visible on this scale.
 
One feature which we have not labelled are points where 
a real eigenvalue and a complex conjugate pair all have the same 
real part, corresponding to points where a real eigenvalue and a 
complex conjugate pair exchange dominance. Although it is easy to write down 
an implicit equation satisfied by these curves, it is generally 
not possible to find an explicit formula as for the 
Hopf curve, envelope, etc. 

Since the characteristic polynomial of this problem is of order four it 
would in principle be possible to extract the above information directly from 
the solution formula for the quartic.  In practice it would be 
exceedingly difficult to recover such detailed information. In
the next section we will consider a model that arises from a differential equation of order \textit{eight}; in this 
situation it is no longer possible even in principle to state a direct formula for the roots. \\

\noindent
\textbf{Figure 2 here}\\

In this section, we identified -- and gave a recipe for computing -- a set of geometric quantities associated with the spectrum of a rank-two perturbation of a well-known operator. 
Because some of these quantities (specifically the bifurcation curve, the $\lambda=0$ eigenvalue curve, and the Hopf curve) form curves which together partition the $(\rho_1, \rho_2)$ plane into open sets with qualitatively similar behavior, this information allows us to analyze the perturbed operator in the entire plane.
We now apply this procedure to three specific problems which can be written in the form of Eqn. \eqref{eqn:Rank1}: a model of a coupled brainstem-cerebellum neuronal network called the \textit{oculomotor integrator} (\S \ref{sec:numerics}); a continuum version of that model, in which the (relatively numerous) brainstem neurons are replaced by a neural ``line" (\S \ref{sec:continuum}); and a stability problem that arises in a nonlocal reaction-diffusion equation \cite{RS92} (\S \ref{sec:RS_model}).

\section{Example: a model for the oculomotor integrator} \label{sec:numerics}

The \textit{oculomotor integrator} is a neural network that 
is essential for eye movement control. This network holds your gaze steady despite transient body motions, by using cues it receives from oculomotor subsystems such as the vestibular system (which processes sensory input from the semi-circular canals of the inner ear) \cite{R89,r89b}.
Here, we present a model for the oculomotor integrator
which can either simulate normal 
integrator function, or a common eye movement disorder --- congenital or \textit{infantile nystagmus} (IN) --- with a few small changes.
We are able to make this determination by analysis of 
the envelope, constant eigenvalue curves and Hopf curve 
as described in \S \ref{sec:basic_calc}. Some of this analysis was presented in \cite{bba08}, which focused on using the model to simulate various eye movements associated with IN; here, we focus on analyzing the full phase space generated by the low-rank perturbations.

We first summarize how the integrator works. 
The neurons that compose the integrator are located mainly in a brainstem region known as the \textit{vestibular nucleus}. 
These neurons must integrate \textit{velocity} signals into a desired \textit{position}; thus, they perform the operation of integration with respect to time (or temporal integration). 
Single neurons produce a small amount of temporal integration, in that
a firing rate increase due to a transient input will decay with a time constant of about 5 ms. The observed integrator time constant is closer to 20 seconds (s), so the integrator network must lengthen the single-neuron time constant by about 4000 times \cite{r89b};
for a linear system, this corresponds to having an eigenvalue near zero \cite{BarreiroNN09}. Furthermore, the integrator should have the right \textit{gain} in response to velocity signals; the ratio of its response to the input should be appropriate. It should also be \textit{plastic}; i.e. it should be able to adjust, if injury or some other change occurs (for example, you adjust your oculomotor integrator gain when you get a new pair of glasses) \cite{BM85,GM76a,GM76b,tsrz94}.

In order for the oculomotor integrator to function, the vestibular nucleus must be connected to a second major brain region, the \textit{cerebellum}\cite{Robinson74,R76,b88}. These connections are essential both for normal operation and for plasticity \cite{zybg81,cgrst90,rambold02,NK03}. Neuroanatomical research has shown that these connections are asymmetric in an
important way: 
while the connections from the vestibular nucleus to the cerebellum are numerous and excitatory, the feedback connections from the cerebellum are sparse and are inhibitory \cite{egv90,lfsc85,lfcsl85,Sekirnjak_etal_2003,tg92,BV00}. 
Anastasio and Gad \cite{ag07} showed previously that this asymmetry permits the cerebellum to sensitively control both the time constant and the gain of the oculomotor integrator, despite (or perhaps because) the projections from the cerebellum back to the vestibular nuclei are so sparse. 
\emph{These sparse cerebellar-to-vestibular connections, which can plastically change their strength, are precisely the low-rank perturbations we analyze in detail here.}

Anastasio and Gad \cite{ag07} proposed a linear differential equation 
model that combined a 
vestibular network with sparse, asymmetric feedback connections from cerebellar \textit{Purkinje cells}, i.e., the evolution of the system is given by:
\begin{equation}
\frac{d\vec v}{dt} = \widetilde{\MM} \vec v + s(t) \vec b ~~~~~~~~\vec v(0)=\vec 0,  \label{eqn:diffeq}
\end{equation}
where $\vec v$ represents the response of the system 
(vestibular neurons and Purkinje cells
together), $\widetilde{\MM}$ represents a matrix of connections, $s(t)$ is a 
scalar input (velocity) signal to the integrator, and $\vec b$ 
a fixed vector representing the pattern in which the vestibular neurons 
receive the input signal. 

The connection matrix $\widetilde{\MM}$ proposed by \cite{ag07} was:
\begin{eqnarray}
\widetilde{\MM} & = & \alpha \left(\begin{array}{ccc} {\bf T} & -\rho_1 \vec u_1 & - \rho_2 \vec u_2  \label{eqn:matrix_AGmodel} \\
\vec w_1^t & -1 & 0 \\ \vec w_2^t & 0 & -1 \end{array} \right)  
\end{eqnarray}
The vestibular neuron-to-Purkinje cell coupling vectors are given by the $\vec w_i$;
the Purkinje cell-to vestibular neuron 
coupling vectors are given by $(\vec u_1)_j = \delta_{j,k_1}, (\vec u_2)_j = \delta_{j,k_2}$, where $\delta_{j,k}$ is the Kronecker delta. 

We state the remaining details for completeness; for a full explanation of the biophysical motivation, please see \cite{ag07,bba08}. The matrix ${\bf T}$ of effective connections 
between vestibular neurons is given by 
\begin{equation}
{\bf T} = \left(\begin{array} {cccccc} -1+\beta & \beta & 0 & 0& \cdots & 0 \\
\beta & -1+\beta & \beta & 0 & \cdots & 0 \\
\vdots & \vdots & \vdots & \vdots  & \ddots & \vdots \\
0 & 0 & 0 & 0 & \beta & -1 + \beta  \end{array}\right).
\label{eqn:model2}
\end{equation}
 The parameter $\alpha$ is set to $200\, {\rm s}^{-1}$ (corresponding to a typical $5\, {\rm ms}$ membrane time constant); the parameter 
$\beta$ is fixed so that the vestibular sub-network has a time constant of $0.2 \, {\rm s}$ in the absence of cerebellar interaction ($\rho_1=\rho_2=0$).  
The largest eigenvalue of ${\bf T}$ is given by $\lambda_1/\alpha  = -1 + \beta(1 + 2 \cos(\frac{\pi}{N+1}))$, where $N$ is the number of vestibular cells (on each side of the network); therefore we choose $\beta = \frac{\lambda_1/\alpha + 1}{1 + 2 \cos \left( \frac{\pi}{N+1} \right)}$, where $\lambda_1 = 5\, {\rm s}^{-1}$.

As we have noted, we will treat the Purkinje-to-vestibular feedback connections as our low rank perturbations; to relate this model to Eq.~\eqref{eqn:Rank1}, take
\begin{eqnarray}
\MM & = & \alpha \left(\begin{array}{ccc} {\bf T} & \vec 0 & \vec 0  \label{eqn:matrix_AGmodel_explicit_decomp} \\
\vec w_1^t & -1 & 0 \\ \vec w_2^t & 0 & -1 \end{array} \right)  ; \qquad \vec f_i = -\rho_i \left[ \begin{array}{c}\vec u_i\\0\\0\end{array} \right], \; i=1,2; \qquad (\vec g_i)_j =  \delta_{j,N+i}, \; i=1,2
\end{eqnarray}
where $N$ is the number of vestibular neurons (therefore $\MM$ is $(N+2) \times (N+2)$, and $\vec f_i, \vec g_i$ are in $\RM^{N+2}$). 
We will assume that the output of the integrator is a linear readout of the vestibular neuron responses, which for simplicity we take to be equal to $\vec b$: i.e. $\langle\vec b,\vec v(t)\rangle$.  Defining the eigenvectors $\vec e_i$
and adjoint eigenvectors $\vec f_i$ respectively by 
\begin{eqnarray*}
\widetilde{\MM} \vec e_i &=& \lambda_i \vec e_i \\
\widetilde{\MM}^t\vec f  &=& \lambda_i \vec f_i 
\end{eqnarray*}
the linear readout at time $t$ (assuming $\widetilde{\MM}$ diagonalizable)
is given by 
\[
\langle\vec b,\vec v(t)\rangle = \sum_i  \frac{ \langle \vec b, \vec e_i \rangle \langle\vec f_i,\vec b\rangle}{\langle\vec f_i, \vec e_i\rangle} \int_0^t e^{\lambda_i(t-t')}s(t') dt'. 
\]
When there is a separation of time scales (${\rm Re} (\lambda_2) \ll {\rm Re} (\lambda_1)$), the response is largely due to the dominant eigenvalue; we define the \textit{gain}, $\gamma$, to be the ratio of this response, to the magnitude of the filtered input:
\begin{eqnarray}
\langle\vec b,\vec v(t)\rangle & \approx  & \frac{ \langle \vec b, \vec e_1 \rangle \langle\vec f_1, \vec b\rangle}{\langle\vec f_1, \vec e_1\rangle} \int_0^t  e^{\lambda_1(t-t')}s(t') dt'\\
& =  & \frac{ \langle \vec b, \vec e_1 \rangle \langle\vec f_1, \vec b \rangle}{\langle\vec f_1, \vec e_1\rangle \Vert \vec b \Vert^2}  \times \left[ \Vert \vec b \Vert^2 \int_0^t  e^{\lambda_1(t-t')}s(t') dt' \right]\\
 &= &\gamma  \left[ \Vert \vec b \Vert^2 \int_0^t  e^{\lambda_1(t-t')}s(t') dt' \right] \label{eqn:gain_def}
\end{eqnarray}
Thus, $\gamma$ captures how the circuit amplifies --- or suppresses --- the incoming signal.

Setting this ratio correctly allows the organism to respond appropriately to its environment. However, injury or normal growth can alter the sensory systems that supply inputs to the integrator, and thus create a mismatch between input and desired output. To counteract this, the system must change this gain to compensate. We will allow our network to change, by adjusting the Purkinje-to-vestibular weights $\rho_1$ and $\rho_2$; this is biologically plausible, since one of the major functions of the cerebellum is to regulate motor plasticity and learning. 

The asymmetry of the matrix $\widetilde{\MM}$ is crucial, to allow gain to be adjusted freely; for a symmetric (more generally \textit{normal}) matrix, $-1 \le \gamma \le 1$.  When can we get big changes in gain, with relatively small changes in $\rho_1$ and $\rho_2$? It would be ideal for the denominator of $\gamma$ to be near zero (assuming both $\vec f_1$, $\vec e_1$ are normalized to unit length): $\langle\vec f_1, \vec e_1\rangle \ll 1$. However, this corresponds to near-orthogonality of the right and left eigenvectors, which occurs near a double eigenvalue (perfect orthogonality, $\langle \vec f_1, \vec e_1 \rangle = 0$, can occur only when there is a degenerate double eigenvalue).

We now show an example of an integrator that can perform normal integration
with arbitrary adjustment of gain: let ${\bf T}$ be $6 \times 6$ and choose
\begin{eqnarray}
\vec u_1 & = & \vec {\bf e}_1, \qquad \vec u_2 =  \vec {\bf e}_3   \nonumber\\
\vec w_1^t &= &  \left[ \begin{array}{cccccc} -1 & 1 & -1 & 0 & -1 & 0 \end{array} \right]   \label{eqn:matrix_norm}\\
\vec w_2^t &= &  \left[ \begin{array}{cccccc} 1 & -1 & 1 & 1 & 0 & 0 \end{array} \right]  \nonumber
\end{eqnarray}
where $\vec {\bf e}_j$ is the $j$th identity vector (in this case $\vec {\bf e}_j \in \RM^6$). 


\noindent
\textbf{Figure 3 here}\\

With this choice, $\widetilde{\MM}$ is the reduced matrix of a model with 6 vestibular neurons 
and 2 Purkinje cells on each side of the bilaterally symmetric
network. 
The only parameters that may vary are $\rho_1$ and $\rho_2$, the strengths
of the Purkinje-to-vestibular connections. In order to
fix a certain time constant, $\rho_2$ and $\rho_1$ should be constrained 
to lie on the appropriate constant eigenvalue curve.
The biologically appropriate time constant is in the neighborhood of 
$20\, {\rm s}$, so the appropriate eigenvalue  is 
$\lambda = -\frac{1}{20}.$  From the results of \S \ref{sec:basic_calc} we know
that this holds along the curve 
\[
Q(-\frac{1}{20})\rho_1\rho_2 + P_2(-\frac{1}{20}) \rho_2 + P_1(-\frac{1}{20}) \rho_1 + D(-\frac{1}{20})
 = 0
\]
or (approximately)
\begin{equation}
\rho_1 = \frac{0.137 + 2.536 \rho_2}{1+.371\rho_2}
\label{eqn:consteig_norm_tau}
\end{equation}
This constant eigenvalue curve is tangent to the envelope at the simultaneous 
solution of 
\begin{align*}
0.137 - \rho_1 + 2.536\rho_2 - 0.371 \rho_1 \rho_2 &= 0\\
0.577 - \rho_1 + 2.405\rho_2 - 0.473 \rho_1 \rho_2 &= 0.
\end{align*}
(Note: we have divided each equation through by a constant.) The biologically 
important root of the above pair of equations is the one in the first quadrant,
$(\rho_2,\rho_1)=(1.22,2.23):$ negative values of $\rho$ would correspond 
to an \textit{excitatory} Purkinje-to-vestibular connection, which is not known to 
occur. 

The basic picture of integrator operation is as follows: let us suppose that 
$\rho_2$ is allowed to vary but that $\rho_1$ is given by Eqn. \eqref{eqn:consteig_norm_tau}, so 
that $\lambda=-\frac{1}{20}$ is always an eigenvalue. 
As $\rho_2$ is increased from zero the dominant eigenvalue is fixed at 
  $\lambda=-\frac{1}{20}$ and the 
(in this case real) subdominant eigenvalue increases. 
Because we are nearing the envelope curve (and thus a degenerate double eigenvalue $\lambda_2 = \lambda_1 = -\frac{1}{20}$),
we expect the gain to increase. At $\rho_2=1.22$,
where the constant eigenvalue curve is tangent to the envelope,  
there is a collision of eigenvalues and the dominant eigenvalue is degenerate. 
As $\rho_2$ is further increased the formerly subdominant eigenvalue 
is now dominant - it is real and larger than $-\frac{1}{20}$. 
Furthermore, we can compute the gain
 associated to the dominant mode
along the $\lambda=-\frac{1}{20}$ curve and find 
\begin{equation}
\gamma = \frac{\langle \vec f_1,\vec b\rangle \langle \vec b,\vec e_1\rangle}{\Vert\vec b\Vert^2\langle \vec f_1,\
\vec e_1\rangle} \approx \frac{0.05 (\rho_2 + 1.43)(\rho_2 + 1.86)}{(1.22-\rho_2)(1.65+\rho_2)}
\label{gain:ratfunc}
\end{equation}
Note that, as expected, the denominator of the gain diverges at $\rho_2=1.22,$
where the constant eigenvalue curve is tangent to the envelope and 
the $\lambda=-\frac{1}{20}$ eigenvalue is degenerate. 
 
In Figure \ref{fig:norm_imp}A we show the response of the network
to an impulsive forcing of the form $f(t) = \delta(t) (1,1,1,1,1,1,0,0)^t$  
as $\rho_2$, $\rho_1$ are varied to increase gain. (Note that the impulsive 
forcing is equivalent to free decay with a corresponding initial 
condition). Three responses are shown, with $\rho_2$ set to $0.65$,$0.955$, 
and $1.095$ 
respectively. For each value of $\rho_2$, $\rho_1$ is set so
that the network lies on the constant time constant 
curve $\tau = 20 s$. The gains predicted based on a single dominant  
mode ($\gamma$ in Eqn. \eqref{gain:ratfunc}) 
are $2.52,5.92$ and $12.88$. The corresponding measurements from the impulse response (given by dividing the maximum response by $\Vert\vec b\Vert^2$) yielded $2.54,5.87$ and $12.53$ respectively; so we see excellent agreement. 

Figure \ref{fig:norm_imp}B displays the interaction of the
two dominant eigenvalues of the network in the
vicinity of the current operating region. In order to increase gain,
the network must climb up the $\lambda = -0.05$ curve in the
vicinity of the double eigenvalue point. At the intersection of this
curve with the envelope, the integrating eigenvalue exchanges
dominance with another real eigenvalue, producing an unstable integrator.
Note that the larger two gain cases straddle the point where the 
$\lambda = -\frac{1}{20}{\rm s}^{-1}$ curve crosses the envelope, 
indicating  an eigenvalue bifurcation in a subdominant mode. 
In this case it is the mode(s) with the next largest real part. 
At about $\rho_2 \approx 1.02$ the subdominant complex conjugate pair 
collides at the real axis, and for $\rho_2$ above this value the first three 
most dominant eigenvalues are all real. As $\rho_2$ increases 
the subdominant eigenvalue increases until $\rho_2 \approx 1.22$ where 
there is an eigenvalue collision and exchange of dominance. 

Figure \ref{fig:norm_imp}B
also uses letter labels to show the character of the dominant eigenvalue.
Normal operation requires the network to remain in regions A or G. If the network is in
error, it may wander into a region where the two eigenvalues are
complex (F), or into the region where one or both eigenvalues are
in the right half-plane (B,C, D or E). In neither case is normal operation possible. 

\textit{Infantile nystagmus} (IN)
is a hereditary disorder characterized by involuntary, periodic eye movements. These movements (or waveforms) can be broadly classified into two forms, \textit{jerk} and \textit{pendular}.  In jerk waveforms, the eye moves outward from the central position with increasing speed until interrupted by a sudden saccade; a pendular waveform resembles a sinusoidal oscillation.  The presence of a jerk waveform suggests an unstable eigenvalue ($ \lambda > 0$); as noted in many integrator models, the need to maintain an eigenvalue very near zero implies that 
an unstable eigenvalue is a natural consequence of imprecision.

In our model, the ability to modulate gain is enhanced near the bifurcation curve; this suggests that the network is \textit{also} near a point where the dominant eigenvalue is complex, 
which would generate the sinusoidal oscillations characteristic of pendular nystagmus. 
For example, consider the network specified in Eqn. \eqref{eqn:matrix_cn}. This is very similar to  Eqn. \eqref{eqn:matrix_norm}; the only change is that the
vestibular-to-Purkinje input from a handful of neurons has been altered.
Here, as the cerebellum attempts to increase gain by adjusting 
$\rho_1$ and $\rho_2$, it enters an oscillating regime.
\begin{eqnarray}
\vec u_1 & = & \vec {\bf e}_1, \qquad \vec u_2  \; =  \; \vec {\bf e}_3   \nonumber\\
\vec w_1^t &= &  \left[ \begin{array}{cccccc} -1 & 1 & 0 & 0 & -1 & 0 \end{array} \right]   \label{eqn:matrix_cn}\\
\vec w_2^t &= &  \left[ \begin{array}{cccccc} 1 & -1 & 0 & 0 & 1 & 0 \end{array} \right]  \nonumber
\end{eqnarray}

\noindent
\textbf{Figure 4 here}\\

We see the impulse response of the network in Figure \ref{fig:cn_imp}A.
As in Fig. \ref{fig:norm_imp}A, $\rho_2$ and $\rho_1$ are varied so
as to remain along the $\lambda = -\frac{1}{20}{\rm s}^{-1}$ 
curve. As $\rho_2$ (and the gain)
increase, the network enters a regime where an oscillation is superimposed
on normal integration. Figure \ref{fig:cn_imp}B illustrates
why this behavior occurs. 
As we follow the $\lambda = -\frac{1}{20}{\rm s}^{-1}$ curve from left to right through Region A,
gain will increase. However,
the eigenvalue curve has an intersection with the Hopf curve
far to the left of its intersection with the envelope.
At this point the integrating eigenvalue exchanges dominance with
a pair of imaginary eigenvalues. Beyond this point, the response 
of the network contains both the normal integrating mode and a
superimposed oscillation (Region F).

\section{Example: a continuum model of the oculomotor integrator} \label{sec:continuum}
We next consider a continuum model
that can be derived from the model of Anastasio and Gad
by replacing the (relatively numerous) vestibular neurons with a continuum 
vestibular ``line'' denoted by $\psi(x)$, while the relatively few Purkinje 
cells remain 
discrete.  

Suppose the network has $N$ vestibular cells on each side of the integrator, arranged in a row of length $L$. 
Recall that the
vestibular interaction matrix ${\bf T}$ has nearest-neighbor structure; each row of ${\bf T}$ takes the form $\beta v_{j-1} + (-1 + \beta) v_j + \beta v_{j+1}$ (Eqn. \eqref{eqn:model2}). This will converge, in the continuum limit ($N \rightarrow \infty$), to a second derivative, because:
\begin{eqnarray*}
\left( \frac{L}{N} \right)^2 \psi_{xx} & = & \psi_{i+1} + \psi_{i-1} - 2 \psi_{i} + 
o\left( \left( \frac{L}{N} \right)^3 \right)
\end{eqnarray*}  
The sum over vestibular cells in the equation for the Purkinje cells (two final rows of Eqn. \eqref{eqn:matrix_AGmodel}) can similarly be replaced by an integral:
\[ \sum_{j}  (\vec w_1^t )_j v_j \rightarrow  \frac{1}{L/N} \int_0^{L} \psi(x) \phi_1(x) dx \]
where $\psi(j \Delta x) = v_j$ and $\phi_1(j \Delta x) = (\vec w_1^t)_j $.

This leads to the following integro-differential eigenvalue problem on $L_2[0,L] \times \RM^2$
\begin{align}
& \beta ( \Delta x )^2 \psi_{xx} + (-1+3 \beta) \psi - \rho_1 P_1 \delta(x-x_1)- \rho_2 P_2 \delta(x-x_
2)  = \lambda \psi,   \label{eqn:psi_cont_def_match} \\
  &\psi(0)=0,~~~~\psi(L)=0  \nonumber\\
&- P_i + \frac{1}{\Delta x} \int_0^L \psi(x)\phi_i(x) dx  = \lambda P_i , \quad i=1,2 \label{eqn:P1_cont_def_match}
\end{align}
where we have replaced $L/N$ with $\Delta x$.  \footnote{We have also scaled the entire problem by the common factor of $\alpha$.} 
The delta function coupling reflects the sparseness of the Purkinje-to-vestibular connections, with $x_{1,2}$ denoting the points on the 
vestibular line innervated by the Purkinje cells, and the 
functions $\phi_{1,2}(x)$ represent the density of vestibular 
to Purkinje connections. 

Eliminating $P_1$, $P_2$ from Eqn. \eqref{eqn:psi_cont_def_match} algebraically, we arrive at the single equation 
\begin{eqnarray}
\psi_{xx} + \frac{-1+3\beta}{\beta (\Delta x)^2} \psi - \frac{\rho_1\langle \psi,\phi_1\rangle}{\beta (\Delta x)^3 (\lambda+1)
} \delta(x-x_1)- \frac{\rho_2\langle \psi,\phi_2\rangle} {\beta (\Delta x)^3 (\lambda+1)
} \delta(x-x_2) = \frac{\lambda}{\beta (\Delta x)^2} \psi, ~~~~
\psi(0)=0=\psi(L)
\label{eqn:reduce_match}
\end{eqnarray}

This maps to our original problem, Eqn.~\eqref{eqn:Rank1}, as follows:
\begin{equation} 
\MM \psi = ( \beta (\Delta x)^2 + (-1 + 3 \beta))\psi; \qquad f_i = \frac{\delta(x-x_i)}{\Delta x (\lambda+1)}, \; i=1,2; \qquad g_i = \phi_i, \; i=1,2.  \label{eqn:S3_MapEqn1}
\end{equation}

This model can be solved in much the same way as the discrete model analyzed in \S \ref{sec:numerics}. To illustrate we take $L=1$ and the vestibular-to-Purkinje connections to be
$\phi_1(x)=\phi_2(x)=1.$  
Note that when $\rho_1=0,\rho_2=0$, the vestibular neurons decouple from the 
Purkinje cells, and the eigenvectors $\psi$ are given by trigonometric functions with the appropriate boundary conditions:
\begin{align}
\psi_n(x) & = \sin \left( n \pi x \right)  \label{eqn:evectors_unperturb}
\end{align}
after which $P_1$, $P_2$ are obtained by using Eqns. (\ref{eqn:P1_cont_def_match}):
\begin{align}
P_1 &=  P_2 =  \frac{N^2(1 - \cos(n \pi))}{n\pi \left( 3\beta - \beta \left(\frac{n \pi}{N}\right)^2 \right)} 
\end{align}
The corresponding eigenvalues are given by $\lambda_n = 
 - 1 + 3 \beta - \beta \frac{n^2 \pi^2}{N^2}$, together with $\lambda = - 1$, an eigenvalue of multiplicity two 
corresponding to the Purkinje cells. 
Note that the even modes ($n=2k$) do not actually excite a Purkinje cell 
response (i.e. $P_1 = P_2 = 0$), and thus the even modes 
are eigenfunctions of this problem for all values of $\rho_1,\rho_2$: these modes do not change under 
perturbation by the Purkinje cells. 

We now apply the techniques developed earlier in the paper to find the 
lines of constant eigenvalue. We will not reproduce the entire calculation, but 
merely note a few salient points. The first step is to act on Equation 
\eqref{eqn:psi_cont_def_match} 
with the appropriate resolvent (inverse) operator 
(here, $ R_{\lambda} = (\beta ( \Delta x )^2 \partial_{xx} +3 \beta - (1+\lambda))^{-1}$).
 The terms  
$R_{\lambda} \delta(x-x_i)$ are simply 
the Green's function for the operator 
$\beta ( \Delta x )^2 \partial_{xx} +3 \beta - (1+\lambda)$ acting on $L_2[0,L]$ with 
Dirichlet boundary conditions, which can be easily 
calculated -- see (for example) the text of Keener\cite{Keener}; we also supply some details in the Appendix. 

The main conclusion, is that the perturbed problem will have piecewise trigonometric eigenfunctions of the form $\sin(\omega x)$, etc.  (with appropriate continuity conditions; see Eqn. \eqref{eqn:psi_Greens}), where
\begin{eqnarray}
\omega^2 & = & \frac{-\lambda -1 + 3 \beta}{\beta (\Delta x)^2} \Rightarrow  \lambda = -1+3\beta - \beta (\Delta x)^2 \omega^2  \label{eqn:lambda_omega_rel}
\end{eqnarray}

We now fix the remaining constants in the model: $x_1=\frac{1}{3}$ and $x_2=\frac{1}{2}$, and perform the calculation thus described.
We find that the resulting envelope curves have asymptotes when $\omega$ is a multiple of $4 \pi$ and $6 \pi$; this is a consequence of the location of the Purkinje cell innervation (i.e. $x_1$ and $x_2$).

One issue that becomes apparent, is that as the network becomes large we are unable to find an integrating eigenvalue using Eqn \eqref{eqn:lambda_omega_rel} in this way;  observe that $\lambda$ is bounded above by $\lambda = -1+3\beta - \beta (\Delta x)^2 \omega^2 <= -1 + 3 \beta \approx -\frac{1}{40} + \frac{39}{120} \left( \frac{\pi}{N+1} \right)^2$, which $\rightarrow -\frac{1}{40}$ as $N \rightarrow \infty$.
However, the desired eigenvalue for a 20 s time constant (recall that we have scaled out the 5 ms single neuron time constant, $\alpha = 200$) is $\lambda = -0.05/200$, which is much closer to zero than $-1/40$; as $N$ increases, the integrating eigenvalue will become out of reach. One way to view this observation is that as ratio of vestibular to cerebellar (Purkinje) cells increases, sparse cerebellar innervation becomes less able to alter the time constant of the vestibular network. 

The alternative, is to consider piecewise exponential eigenfunctions; i.e.  
\begin{eqnarray}
\psi(x) & = & \left\{ \begin{array}{l l} A \sinh \omega x, & x < x_1 \nonumber \\
B \sinh \omega x + C \cosh \omega x, & x_1 < x < x_2 \label{eqn:psi_Greens_exp}\\
D \sinh(\omega (L-x)), & x > x_2 \nonumber
\end{array} \right.
\end{eqnarray}
where now 
\begin{eqnarray}
 \omega^2 & = &\frac{\lambda + 1 - 3 \beta}{\beta (\Delta x)^2} \Rightarrow \lambda = -1 + 3 \beta +  \beta (\Delta x)^2 \omega^2
 \label{eqn:lambda_omega_rel_sinh}
\end{eqnarray}
Although such functions cannot match the boundary conditions of the \textit{unperturbed} problem, the discontinuities in the perturbed problem bring them back into consideration. We can plot the corresponding bifurcation curve; it begins near $(\rho_2, \rho_1) \approx (-0.09, 0.09)$ (for $N=12$) and increases without bound into the second quadrant. In Fig. \ref{fig:continuous_envelope_finiteN}, we plot the phase plane for several values of $N$; $N=12$, 24, 50 and 100.  In each panel, we show several pieces of the bifurcation curve in different colors: $(0, 4\pi)$ (purple), $(4\pi, 6 \pi)$ (blue), $(6\pi, 8\pi)$ (cyan), $(8\pi, 12\pi)$ (green), and the curve for exponential eigenfunctions (yellow). We note that the structure is stable, for increasing $N$; however, the corresponding values of $\lambda$ are ``compressed" (see Eqn. \eqref{eqn:lambda_omega_rel}). This is to be expected; in the finite-dimensional problem, the eigenvalues cluster together as $N$ increases; therefore we expect increasing density of constant eigenvalue curves as $N$ increases. 

We now return to the question that originally motivated our analysis in \S 2; can this network behave as an integrator; i.e. can it achieve the correct time constant and gain, by adjusting its Purkinje-to-vestibular weights $\rho_1$, $\rho_2$?   

Recall that we characterized the performance of an integrator by two quantities; the dominant eigenvalue $\lambda_1$ and the gain $\gamma$. In \S 2 we established that in order to achieve high gain, the parameters $\rho_1, \rho_2$ must be set near the bifurcation point for the corresponding eigenvalue $\lambda_1$; this is where $\lambda_1$ has multiplicity two, and therefore (if degenerate) where the angle between left and right eigenvectors is near zero. Finally, recall that connections from the cerebellum are always \textit{inhibitory}; by the sign convention in Eqn.~\eqref{eqn:reduce_match}, this means that both $\rho_1, \rho_2$ must be positive. \textit{Therefore, a necessary condition for the continuum model to act as an integrator, is for the bifurcation point corresponding to $\lambda_1$ to lie in the first quadrant.}

This clearly does not occur in Fig. \ref{fig:continuous_envelope_finiteN}, where in every panel the yellow curve lies entirely in the second quadrant $\rho_2 < 0$, $\rho_1>0$. Therefore this network --- where $\phi_1 = \phi_2 = 1$, $x_{1,2} = \{ 1/3, 1/2\}$, and plasticity comes from manipulating the strength of the Purkinje-to-vestibular connections $\rho_1$, $\rho_2$ --- cannot regulate both time constant \textit{and} gain.  

We now take one further step in generality, by asking how the network behaves as the Purkinje-to-vestibular connection \textit{locations} --- $x_1$ and $x_2$ --- change.  This would reflect a different source of network plasticity. 
We will show that for the vestibular-to-Purkinje projection patterns chosen here,  the $\rho_1, \rho_2$ coordinates of the bifurcation point will always be of opposite sign, and therefore will never occur in the first quadrant.
\begin{lem}
Consider the continuum integrator model defined in Eq.~\eqref{eqn:reduce_match} with $L=1.$ Suppose $\phi_1 = \phi_2 = 1$; then for sufficiently large $N$, the bifurcation point corresponding to the integrating eigenvalue $\lambda_1$ will not be found in the first quadrant ($\rho_1, \rho_2 > 0$), for any $0< x_1, x_2 < 1$. 
\end{lem}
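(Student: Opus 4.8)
The plan is to reduce the claim to a sign comparison between $h_1'(\lambda_1)$ and $h_2'(\lambda_1)$, where $h_i(\lambda)=\langle\phi_i,(\MM-\lambda)^{-1}f_i\rangle$, and then to settle that comparison with an explicit Green's function computation plus a single convexity estimate. Since $\phi_1=\phi_2\equiv1$, the vectors $g_1,g_2$ in the identification \eqref{eqn:S3_MapEqn1} are linearly dependent, so $Q(\lambda)\equiv0$ and we are in the ``$Q$ identically zero'' case of Lemma \ref{lem:env=bif}. By \eqref{eqn:const_evalue_wInv} the constant--eigenvalue family is $1+\rho_1h_1(\lambda)+\rho_2h_2(\lambda)=0$; differentiating in $\lambda$ (as in the passage from \eqref{eqn:const_evalue} to \eqref{eqn:envelope}), the bifurcation point that carries $\lambda$ as a double eigenvalue also satisfies $\rho_1h_1'(\lambda)+\rho_2h_2'(\lambda)=0$, and eliminating $(\rho_1,\rho_2)$ from these two equations gives the resolvent form of \eqref{eqn:env1}--\eqref{eqn:env2}, namely
\[
\rho_1\rho_2\;=\;-\,\frac{h_1'(\lambda)\,h_2'(\lambda)}{\bigl(h_1\wedge h_2\bigr)^2(\lambda)} .
\]
Hence it is enough to prove that $h_1'(\lambda_1)$ and $h_2'(\lambda_1)$ have the \emph{same} sign: then $\rho_1,\rho_2$ have opposite signs (and if $h_1\wedge h_2$ vanishes at $\lambda_1$ the point is at infinity), so in every case the bifurcation point avoids the first quadrant.

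Next I would compute $h_i$. With $f_i=\delta(x-x_i)/(\Delta x(\lambda+1))$ we have $h_i(\lambda)=\tfrac{1}{\Delta x(\lambda+1)}\int_0^1 G_\lambda(x,x_i)\,dx$, where $G_\lambda$ is the Dirichlet Green's function of $\MM-\lambda=\beta(\Delta x)^2\partial_{xx}+(3\beta-1-\lambda)$ on $[0,1]$. For the integrating eigenvalue the relevant branch is the exponential one \eqref{eqn:lambda_omega_rel_sinh}: this is exactly the point made just before the lemma, that once $N$ is large enough the target $\lambda_1=-1/4000$ lies above the top ($\to-1/40$) of the trigonometric spectrum \eqref{eqn:lambda_omega_rel}, so $\omega^2=(\lambda_1+1-3\beta)/(\beta(\Delta x)^2)>0$. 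Using $G_\lambda(x,\xi)=-\sinh(\omega x_<)\sinh(\omega(1-x_>))/(\beta(\Delta x)^2\,\omega\sinh\omega)$, carrying out the $x$-integral, and invoking the identity $\sinh\omega-\sinh(\omega x)-\sinh(\omega(1-x))=4\sinh\tfrac{\omega x}{2}\sinh\tfrac{\omega(1-x)}{2}\sinh\tfrac{\omega}{2}$ together with $\beta(\Delta x)^2\omega^2=\lambda+1-3\beta$, one gets
\[
h_i(\lambda)=K(\lambda)\,g\bigl(\omega(\lambda),x_i\bigr),\quad K(\lambda)=-\frac{1}{\Delta x(\lambda+1)(\lambda+1-3\beta)},\quad g(\omega,x)=\frac{2\sinh(\omega x/2)\sinh(\omega(1-x)/2)}{\cosh(\omega/2)}>0 .
\]
The crucial feature is that $K$ and $\omega$ do not depend on $i$; only $g$ does, through $x_i$.

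Differentiating, $h_i'=Kg_i\bigl(K'/K+(\ln g_i)'\bigr)$, and at $\lambda=\lambda_1$ one has $Kg_i<0$ and $K'/K=-\tfrac{1}{\lambda_1+1}-\tfrac{1}{\lambda_1+1-3\beta}<0$. With $\omega'(\lambda)=\omega/(2(\lambda+1-3\beta))$ and $\coth t-\tanh t=2/\sinh 2t$,
\[
(\ln g_i)'(\lambda)=\frac{\chi(\omega x_i/2)+\chi(\omega(1-x_i)/2)+\xi(\omega/2)}{2(\lambda+1-3\beta)},\qquad \chi(\theta)=\theta\coth\theta-\theta\ge0,\ \ \xi(\theta)=\theta-\theta\tanh\theta\ge0 .
\]
Now $\theta\coth\theta$ is convex on $[0,\infty)$ --- indeed $(\theta\coth\theta)''=2(\theta\coth\theta-1)/\sinh^2\theta\ge0$ --- so $\chi$ is convex with $\chi(0)=1$, giving $\chi(\tfrac{\omega x}{2})+\chi(\tfrac{\omega(1-x)}{2})\le1+\chi(\tfrac{\omega}{2})$ and hence $\omega\,\partial_\omega\ln g\le1+\chi(\tfrac{\omega}{2})+\xi(\tfrac{\omega}{2})=1+\omega/\sinh\omega<2$ for \emph{every} $\omega>0$ and $x\in(0,1)$. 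Therefore $0\le(\ln g_i)'(\lambda_1)<1/(\lambda_1+1-3\beta)\le|K'/K|(\lambda_1)$, so $K'/K+(\ln g_i)'<-1/(\lambda_1+1)<0$ for both $i$; consequently $h_1'(\lambda_1)>0$ and $h_2'(\lambda_1)>0$, they have the same sign, and by the first paragraph the bifurcation point of $\lambda_1$ is not in the first quadrant --- for any $x_1,x_2\in(0,1)$.

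The main obstacle is the uniform control in the third step. The quantity $(\ln g_i)'(\lambda_1)$ is \emph{not} small; it is of order $1/(\lambda_1+1-3\beta)$, which shrinks only as fast as the gap above the trigonometric spectrum, so one cannot argue by smallness but must show strict domination by $|K'/K|$, the margin coming entirely from the extra term $-1/(\lambda_1+1)$ and the strict inequality $\omega/\sinh\omega<1$. The one non-routine ingredient that makes this work \emph{uniformly} in $x_1,x_2$ --- rather than only for each fixed pair, with a threshold $N_0$ depending on it --- is the convexity of $\theta\coth\theta$, which bounds $\chi(\tfrac{\omega x}{2})+\chi(\tfrac{\omega(1-x)}{2})$ even near the corners $x\to0,1$, where the naive asymptotics $\omega x\to\infty$ break down. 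Finally, ``for sufficiently large $N$'' means exactly $\lambda_1+1-3\beta>0$, i.e.\ that the integrating eigenvalue has entered the exponential regime; this is the same largeness already invoked in the paragraphs preceding the lemma.
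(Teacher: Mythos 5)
Your proposal is correct, and it reaches the paper's key reduction by essentially the same route: both arguments observe that since $\phi_1=\phi_2$ the perturbation has $Q\equiv 0$, write the two diagonal resolvent quantities as a single function evaluated at $x_1$ and at $x_2$, and use the envelope formulas \eqref{eqn:env1}--\eqref{eqn:env2} to conclude that the bifurcation point satisfies $\rho_1\rho_2<0$ (equivalently $\rho_1/\rho_2<0$) provided the two derivatives share a sign. Where you genuinely diverge is in how that sign is established. The paper works with $F_\omega(x)=\partial f/\partial\omega$ directly, checks $F_\omega(0)=F_\omega(1)=0$, computes $\partial^2F_\omega/\partial x^2<0$, and invokes the minimum principle for concave functions of $x$; you instead factor $h_i(\lambda)=K(\lambda)\,g(\omega,x_i)$, compare logarithmic derivatives, and control $\omega\,\partial_\omega\ln g$ by the convexity of $\theta\coth\theta$ together with the majorization bound $\chi(\tfrac{\omega x}{2})+\chi(\tfrac{\omega(1-x)}{2})\le \chi(0)+\chi(\tfrac{\omega}{2})$ and the identity $\coth t-\tanh t=2/\sinh 2t$. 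Your route buys two things the paper leaves implicit: it makes the uniformity in $(x_1,x_2)$ near the endpoints transparent, and it isolates exactly where the hypotheses enter ($\lambda_1+1-3\beta>0$ is the ``sufficiently large $N$'' condition placing $\lambda_1$ in the exponential regime, and $\lambda_1+1>0$ supplies the strict margin $-1/(\lambda_1+1)$ in the sign comparison). The paper's route is shorter once one accepts the displayed formula for $\partial^2F_\omega/\partial x^2$. Two small remarks: your parenthetical about the Wronskian $h_1\wedge h_2$ vanishing is actually superfluous once you know $h_1'(\lambda_1),h_2'(\lambda_1)>0$, since the envelope condition $\rho_1h_1'+\rho_2h_2'=0$ alone then forces opposite signs; and the overall sign convention relating $\rho_i$ in \eqref{eqn:reduce_match} to \eqref{eqn:Rank1} does not affect your conclusion because the product $\rho_1\rho_2$ is invariant under $\rho_i\mapsto-\rho_i$.
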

\begin{proof}
We assume that $N$ is large enough that the eigenvalue of interest 
requires an exponential eigenfunction; i.e. $\lambda_1 >  -1+3\beta - \beta (\Delta x)^2 \omega^2$. We begin by directly computing the required terms in the characteristic polynomial: 
\begin{eqnarray}
D(\omega) & = & 1\\
P_1(\omega) & = &  \langle \vec g_1, R_{\lambda} \vec f_1 \rangle  = \frac{-1 + \cosh(\omega x_1) - \sinh(\omega x_1) \tanh \left( \frac{\omega}{2} \right)}{\beta^2 (\Delta x)^3 \omega^2 (3 + \omega \Delta x)^2}\\
P_2(\omega) & = & \langle \vec g_2, R_{\lambda} \vec f_2 \rangle = \frac{-1 + \cosh(\omega x_2) - \sinh(\omega x_2) \tanh \left( \frac{\omega}{2} \right)}{\beta^2 (\Delta x)^3 \omega^2 (3 + \omega \Delta x)^2} \label{eqn:P2_forSinh}
\end{eqnarray}
(refer to Eq.~\eqref{eqn:const_evalue_wInv} for definitions and the Appendix for a more detailed calculation in terms of resolvent operators).  Note that
\[ P_1(\omega) = f(\omega, x_1), \qquad P_2(\omega) = f(\omega, x_2) \]
for the same function $f(\omega, x)$.

Then by Eqns. (\ref{eqn:env1}, \ref{eqn:env2}) the coordinates of the bifurcation point corresponding to any desired $\omega = \omega_0$ will be:
\begin{eqnarray*} 
\rho_1(\omega_0) & = & -\frac{\partial P_2}{\partial \omega}(\omega_0)  \times \left( \frac{1}{P_1\wedge P_2(\omega_0)} \right)\\
\rho_2(\omega_0) & = & \frac{\partial P_1}{\partial \omega}(\omega_0)  \times \left( \frac{1}{P_1\wedge P_2(\omega_0)} \right);
\end{eqnarray*}
consequently, 
\[ \frac{\rho_1(\omega_0)}{\rho_2(\omega_0)} = -\frac{\frac{\partial f}{\partial \omega}(\omega_0, x_2)}{\frac{\partial f}{\partial \omega}(\omega_0, x_1)}
\]
We will show that $\frac{\partial f}{\partial \omega}(\omega,x) > 0$ for $\omega > 0$ and $0 < x < 1$; therefore, the ratio of $\rho_1(\omega_0)$ and $\rho_2(\omega_0)$ must be negative. 

Differentiating the function $f(\omega, x)$ given in Eqn. \eqref{eqn:P2_forSinh}, we find that:
\begin{eqnarray} 
\frac{\partial f}{\partial \omega} (\omega, x ) & = & \left( 12+8 \Delta x^2 \omega^2 \right) \left( 1 + \cosh(\omega) - \cosh(\omega x) - \cosh(\omega(1 - x)) \right)/(1+\cosh(\omega)) \nonumber\\
&- & 2 \omega (3 + \Delta x^2 \omega^2) \left(  (1-x) \sinh(\omega x) + x \sinh(\omega (1-x)) \right) \label{eqn:bigF}
\end{eqnarray}
It may not be obvious what the sign of this function is (the first line is positive and the second negative), but (re-)defining $F_{\omega}(x) \equiv \frac{\partial f}{\partial \omega} (\omega, x )$, we will show that $F_{\omega}(x) \ge 0$ for any $\omega > 0$ and $x \in [0,1]$.  We do this by confirming that $F_{\omega}(x)$ is concave in $(0,1)$, and that $F_{\omega}(0) = F_{\omega}(1) = 0$. Therefore by the minimum principle for superharmonic functions, $ F_{\omega}(x) > 0$ on the interior $x \in (0,1)$.

To confirm concavity, we check that the second derivative in $x$ is negative:
\begin{eqnarray}
\frac{\partial^2 F_{\omega}}{\partial x^2} & = &  \frac{-2\Delta x^2 \omega \left[ \cosh(\omega x) + \cosh(\omega(1-x) \right] - (3 + \Delta x^2 \omega ^2)\left[ x \sinh(\omega(1-x)) + (1-x) \sinh(\omega x)\right]}{\beta^2 \Delta x^3 (3 + \Delta x^2 \omega^2)^2 (1+\cosh(\omega))}.
\end{eqnarray}
Checking that $F_{\omega}(0) =  F_{\omega}(1) = 0$ is a simple matter of substituting $x = 0,1$ into Eqn. \eqref{eqn:bigF}.

\end{proof}

In conclusion, we cannot make this network act as an integrator, by adjusting its Purkinje-to-vestibular connections. Instead, we would have to adjust the underlying vestibular-to-Purkinje projection pattern $\phi_1$. \\

%
\noindent
\textbf{Figure 5 here}\\



\section{Example: The Rubinstein-Sternberg model} \label{sec:RS_model}

In this section we give a third, quite different, application for the technique based on low-rank perturbations. 
Rubinstein and Sternberg\cite{RS92} introduced a nonlocal model for 
phase separation of the form
\begin{align*}
 u_t &= \Delta u + f(u) -\frac{1}{|\Omega|} \int_\Omega f(u) dx \qquad \qquad (x,t) \in \Omega \times (0,\infty) \\
&{\bf n} \cdot \nabla u = 0 \qquad \qquad \qquad x \in \partial \Omega.  
\end{align*}
We will compute the stability of a standing front type solution 
of this model, where the stability operator takes the form of a rank
one perturbation to a standard Sturm-Liouville operator. This model has been
analyzed by a number of authors, most notably Freitas\cite{F94,F94b,F95,F99}, and the
closely related problem of the coarsening rate for  the
Cahn-Hilliard equation has been analyzed in the classic paper of Bates
and Fife\cite{BF93}. The main goal of this example is to illustrate the utility
of treating the problem using the rank-one perturbation formula: while
the final result appears to be new several of the intermediary results
have analogues in the work of Freitas, and we will point these out where
germane. 

We begin with the equation   
\begin{equation}
u_t = u_{xx} + f(u)  -\frac{1}{2 L} \int_{-L}^L f(u) dx \qquad \qquad \qquad u_x(\pm L)=0.  \label{eqn:allen-cahn}
\end{equation}
This equation always admits a constant solution and for sufficiently large 
widths it admits front type solutions.  At $L = \frac{n \pi \sqrt{f'(0)}}{2}$ 
a bifurcation occurs giving rise to a solution containing $n$ fronts. In the 
absence 
of the non-local term these front solutions are (for $L$ finite) always 
unstable. The most unstable mode has a non-vanishing mean, so the instability 
is connected with non-conservation of mass. Rubinstein and  Sternberg 
introduced the non-local term as a Lagrange multiplier to enforce
mass conservation and remove this instability mechanism.

It is easy to see that, if $u$ represents a stationary solution to 
Eqn. \eqref{eqn:allen-cahn} then the linearized evolution equation is 
given by 
\begin{equation}
v_t = v_{xx} + f'(u)v  -\frac{1}{2 L} \int_{-L}^L f'(u) v dx \qquad \qquad \qquad v_x(\pm L)=0.  \label{eqn:lineara-c}
\end{equation}
Therefore the associated eigenvalue problem takes the form of a rank-one perturbation of a self-adjoint 
operator.   To explicitly relate Eqn. \eqref{eqn:lineara-c} to Eqn. \eqref{eqn:Rank1} we can define
\begin{equation} 
\MM v = v_{xx} + f'(u)v; \qquad f_1 = -\frac{1}{2L}; \qquad g_1 = f'(u); \qquad f_2=g_2=0.  \label{eqn:S4_MapEqn1}
\end{equation}

The problem with a bistable cubic nonlinearity, $f(u) = u - u^3$,
 \[
 u_t = u_{xx} + u - u^3  -\frac{1}{2 L} \int_{-L}^L (u - u^3) dx \qquad \qquad u_x(\pm L)=0  
\] 
is the simplest and most natural from a physical perspective, and can be 
analyzed rather explicitly. Assuming $L > \frac{\pi}{2}$ there is a front 
solution which can be expressed in terms of elliptic functions\footnote{For fixed period $L$ there is actually a one-parameter family of front solutions. 
The one given here has zero net mass and is the simplest. The rest
will be discussed later.}. After a 
simple rescaling ($u = (1 + k^2)^{-1} v$, $x = (1 + k^2)^{-1/2} y$, and $t = \frac{\sqrt{2} k}{\sqrt{1+k^2}}s$), the equation can be written in the form
 \begin{equation}
u_t = u_{xx} + (1+k^2)u - 2 k^2 u^3  -\frac{1}{2 K} \int_{-K(k)}^{K(k)} ((1+k^2) u - 2 k^2 u^3) dx \qquad \qquad u_x(\pm K(k))=0. \label{eqn:front}  
\end{equation}
 Here the quantity $k\in(0,1)$ denotes the elliptic modulus and $K(k)$ denotes 
the complete elliptic integral of the first kind  
\[
K(k)=\int_0^1\frac{dx}{\sqrt{(1-x^2)(1-k^2 x^2)}}\in(\frac{\pi}{2},\infty).
\]
 The elliptic modulus $k$ is determined from $L$ by the relation  
\[
\sqrt{1+k^2} K(k) = L.
\]

The unperturbed problem now becomes 
 \begin{equation}
u_t = u_{xx} + (1+k^2)u - 2 k^2 u^3  \qquad \qquad u_x(\pm K(k))=0;  \label{eqn:front_unperturbed}  
\end{equation} 
a well-known identity for elliptic functions states that a stationary solution to this equation is given by 
\[
u(x;k) = {\rm sn}(x,k)
\]
where ${\rm sn}$ is the Jacobi elliptic sinus function.  

Note that as the function ${\rm sn}(x,k)$ is odd the 
nonlocal term $\frac{1}{2 K} \int_{-K(k)}^{K(k)} ((1+k^2) u - 2 k^2 u^3) \, dx$ vanishes; thus, this function solves \textit{both} the perturbed and unperturbed problem (i.e. both Eqn. \eqref{eqn:front} and Eqn. \eqref{eqn:front_unperturbed}). However, the non-local term 
changes the stability problem; we will find that the stability properties of the two problems 
are not the same.

Linearizing around the elliptic function solution gives the 
following non-local evolution equation
\begin{align}
v_t &= v_{xx} + (1+k^2) v - 6 k^2 {\rm sn}^2(x,k) v - \frac{1}{2 K(k)}\int_{-K(k)}^{K(k)} (1 + k^2 - 6 k^2 {\rm sn}^2(x,k) ) v dx    \\
&= {\bf H} v -\frac{1}{2 K(k)} \int_{-K(k)}^{K(k)} (1 + k^2 - 6 k^2 {\rm sn}^2(x,k) ) v \, dx  =: \widetilde {\bf H} v,
\label{eqn:front_cubic_stability}
\end{align}
where we have identified $\widetilde {\bf H}$ as a perturbation from a ``simpler" operator, ${\bf H}$.
The unperturbed operator ${\bf H}$ is a two-gap Lam\'e operator, for which the 
spectral problem can be solved exactly \cite{MR0197830,MR3075381}. When subject to periodic boundary 
conditions on $[-2K(k),2K(k)]$ the largest five eigenvalues are simple
and are given by 
\begin{equation}
\begin{array}{ll}
\phi_0^{(N)}(x) = k^2 {\rm sn}^2(x,k) - \frac{1 + k^2 + a(k)}{3}, & \lambda_0^{(N)} = -(1+k^2 - 2 a(k)) \\
\phi_1^{(D)}(x) = {\rm cn}(x,k){\rm dn}(x,k), & \lambda_1^{(D)} = 0 \\
\phi_1^{(N)}(x) = {\rm sn}(x,k){\rm dn}(x,k), & \lambda_1^{(N)} = -3k^2 \\
\phi_2^{(D)}(x) = {\rm cn}(x,k){\rm sn}(x,k), & \lambda_2^{(D)} = -3 \\
\phi_2^{(N)}(x) = k^2 {\rm sn}^2(x,k) - \frac{1 + k^2 - a(k)}{3}, & \lambda_2^{(N)} = -(1+k^2 + 2 a(k))
\end{array}
\label{eqn:eigs_RS_unperturbed}
\end{equation}
with $a(k) = \sqrt{1 - k^2 + k^4}$. Here the superscript indicates whether the 
function satisfies a Neumann or a Dirichlet condition at $\pm K(k)$. Thus the 
unperturbed operator subject to Neumann boundary 
conditions has one positive eigenvalue $\lambda_0 = -(1+k^2 - 2 a(k))$ and the 
remainder of the eigenvalues on the negative real line. 

To summarize, the stability problem (Eqn. \eqref{eqn:front_cubic_stability}, plus boundary 
conditions $v_x (\pm K(k)) = 0$) takes the 
form of a rank-one perturbation of a self-adjoint problem:
\[
\widetilde {\bf H}v  = {\bf H}v + G \langle g,v\rangle  
\]
with $G = -\frac{1}{2 K(k)}{\bf 1}$ and $g = (1+k^2 - 6 k^2 {\rm sn}^2(x,k))$. 
While neither $G$ nor $g$ is an eigenvector of ${\bf H}$, both $G$ and $g$ lie in the span of the zeroth and second 
eigenfunctions $\phi_{0/2}^{(N)}(x) = k^2 {\rm sn}^2(x,k) - \frac{1}{3}(1 + k^2 \mp a(k))$. Since the eigenfunctions of a self-adjoint operator 
are orthogonal this implies that the rank-one piece, $G \langle g,v\rangle$ 
vanishes on the span of all the remaining eigenfunctions.   
This implies that the perturbed operator decomposes as a 
direct sum of two operators, one that is self-adjoint and 
negative definite and one that is rank-two: 
\begin{eqnarray}
\widetilde {\bf H} & = & \underbrace{{\widetilde {\bf H}|_{{\rm span}(\phi_0,\phi_2)}}}_{Rank-two} \oplus\underbrace{\widetilde {\bf H}|_{{\rm span}(\phi_0,\phi_2)^\bot}}_{negative~definite} = {\widetilde {\bf H}|_{{\rm span}(\phi_0,\phi_2)}} \oplus {\bf H}|_{{\rm span}(\phi_0,\phi_2)^\bot}.
\end{eqnarray} 
We emphasize that the perturbation term here is very special, in that $G$ and $g$ are actually given by linear combinations of 
just two of the eigenfunctions of the unperturbed operator. In the generic case (i.e. for a different nonlinearity $f(u)$) one expects $G$ and $g$ to have non-trivial projections 
onto all eigenfunctions. 

From the exact eigenvalues in Eqn. \eqref{eqn:eigs_RS_unperturbed} the second term satisfies 
the coercivity estimate
\[
\widetilde {\bf H}|_{{\rm span}(\phi_0,\phi_2)^\bot} \leq -3k^2 {\mathbb I}
\]
and the entire stability problem is reduced to understanding the two by two 
matrix eigenvalue problem defined by $\widetilde {\bf H}|_{{\rm span}(\phi_0,\phi_2)}.$
Furthermore, since the range of ${\widetilde {\bf H}}$ consists 
of mean zero functions it follows from the Fredholm alternative that $\widetilde {\bf H}|_{{\rm span}(\phi_0,\phi_2)}$
must have a zero eigenvalue. 

This zero eigenvalue is connected with mass conservation. There is a 
one-parameter family of solutions to this equation of fixed spatial period 
$L$, which can be thought of as being related to the total mass of the 
stationary solution. Here we have only considered the simplest solutions 
-- those with zero net mass -- but there are analogous expressions in 
terms of elliptic functions for the general solution. Since we have a 
one parameter family of solutions, by Noethers theorem there must 
be an element in the kernel of linearized operator corresponding to the 
generator of this family.

It is straightforward to compute the restriction of the linearized operator 
$\widetilde {\bf H}|_{{\rm span}(\phi_0,\phi_2)}$ in the (independent but non-orthogonal) 
basis $\{1,{\rm sn}^2(x,k)\}$ in terms of complete elliptic integrals of the 
first and the second kind. By using the following identities:
\begin{eqnarray*}
E(k) &= \int_0^1 \frac{\sqrt{1-k^2 x^2} dx}{\sqrt{1-x^2}} \\
\int_{-K(k)}^{K(k)} {\rm sn}^2(x,k) dx & = \frac{2 (K(k)-E(k)}{k^2} \\
\int_{-K(k)}^{K(k)} {\rm sn}^4(x,k) dx & = \frac{(4 + 2 k^2)K(k)-(4 + 4 k^2)E(k)}{3k^4} \\
\end{eqnarray*}
we derive the following expression for $\widetilde {\bf H}|_{{\rm span}{\phi_0,\phi_2}}$, the restriction of the operator to the span of the zeroth and second eigenfunctions, in the basis $\{1,{\rm sn}^2(x,k)\}:$
\[
\widetilde {\bf H}|_{{\rm span}{\phi_0,\phi_2}} = \left(\begin{array}{cc} 6\frac{K(k)-E(k)}{K(k)} &  \frac{3 (1+k^2)\left( K(k) - E(k)\right) }{k^2 K(k)}\\ - 6k^2  & -3(1 + k^2)\end{array}\right).
\]
The eigenvalues of this restriction are given by 
\begin{align}
& \lambda_0 = 0 \\
& \lambda_1 = \frac{(3-3k^2)K(k) - 6 E(k)}{K(k)}.
\end{align}
  
It is easy to check directly from the definition of the elliptic 
integrals that the quantity $(3-3k^2)K(k) - 6 E(k)$ is strictly
negative for $k \in [0,1)$. This follows, for instance, from the 
Taylor series representations 
\begin{align*}
&K(k) = \frac{\pi}{2} \left(1 + \sum_{i=1}^\infty \left(\frac{(2i-1)!!}{(2i)!!}\right)^2 k^{2i} \right)\\
& E(k) =\frac{\pi}{2}\left(1 -\sum_{i=1}^\infty \left(\frac{(2i-1)!!}{(2i)!!}\right)^2 \frac{k^{2i}}{2i-1}\right),
\end{align*}
from which it is easy to see that the quantity $(1-k^2) K(k) - 2 E(k)$ is even with only negative 
terms in its Taylor series, implying that it is strictly negative. 
Since the equation conserves mass it 
makes sense to consider only perturbations with zero net mass 
(those orthogonal to the kernel). In this case we have all 
eigenvalues strictly in the left half-plane and the 
stationary solution is non-linearly stable.

The problem for a general nonlinearity is slightly more complicated, since we do not have 
explicit formulae for the eigenvalues and eigenfunctions, but it can essentially be completely solved.
Assuming that the equation 
\[
u_t = u_{xx} + f(u) - \frac{1}{2L}\int_{-L}^{L} f(u) dx \qquad \qquad u_x(\pm L)=0
\]  
has a stationary solution $u(x)$ we can compute the linearized operator 
as 
\begin{eqnarray*}
\lambda v &= &v_{xx} + f'(u) v -\frac{1}{2L} \int_{-L}^{L} f'(u) v \, dx, \qquad \qquad v_x(\pm L)=0 \\
\Rightarrow \lambda v &= &{\bf H} v - \frac{1}{2L} \langle f'(u),v \rangle   {\bf 1}  = \widetilde {\bf H} v
\end{eqnarray*}
where ${\bf H}$ is the unperturbed operator: ${\bf H} v =  v_{xx} + f'(u) v$.
It is convenient to introduce a coupling constant $\rho$ controlling the strength 
of the perturbation, although we are mainly interested in the special case $\rho = 1$, 
and we thus consider a one-parameter family of eigenvalue problems 
\[{\bf H}_\rho v = {\bf H} v - \frac{\rho}{2L} \langle f'(u),v \rangle   {\bf 1} = \lambda v. 
\]
Applying the Aronszjan-Krein formula gives the following eigenvalue
condition:
\begin{eqnarray*}
& ({\bf H}-\lambda {\bf I})v =  \frac{\rho}{2L} \langle f'(u),v \rangle {\bf 1} \Rightarrow \\
& v =     \frac{\rho}{2L} ({\bf H}-\lambda {\bf I})^{-1}  {\bf 1} \langle f'(u),v \rangle \Rightarrow \\
& \langle f'(u),v \rangle =  \frac{\rho}{2L}\langle f'(u), ({\bf H}-\lambda {\bf I})^{-1}  {\bf 1} \rangle \langle f'(u),v \rangle. 
\end{eqnarray*}
Thus the spectrum again decomposes into two pieces. Any eigenvectors of ${\bf H}$ which happen to be orthogonal to $f'(u)$ ($\langle f'(u),v\rangle=0$) remain 
eigenvectors of the
perturbed problem. Eigenvectors which are not orthogonal to $f'(u)$
must satisfy the Aronszajn-Krein eigenvalue condition
\begin{equation}
1 = \frac{\rho}{2L}\langle f'(u), ({\bf H}-\lambda {\bf I})^{-1} {\bf 1} \rangle
\label{eqn:EigCond}
\end{equation}
(This condition was also identified by Freitas \cite{F94}, who referred to the 
eigenvalues which satisfy Eqn. \eqref{eqn:EigCond} as ``moving eigenvalues'').

Now note that one has the identity ${\bf H} {\bf 1} = f'(u)$ and thus we can write 
\begin{eqnarray*}
1 & = & \frac{\rho}{2L}\langle {\bf H} {\bf 1}, ({\bf H}-\lambda {\bf I})^{-1} {\bf 1} \rangle  \\
& = & \frac{\rho}{2L}\langle ({\bf H}-\lambda {\bf I}+\lambda{\bf I}){\bf 1},  ({\bf H}-\lambda {\bf I})^{-1} {\bf 1} \rangle  \\
& = & \rho + \frac{\rho}{2L}\langle {\bf 1} \lambda, ({\bf H}-\lambda {\bf I})^{-1} {\bf 1} \rangle \Rightarrow \\
 &\Rightarrow & \frac{\lambda}{2L}\langle {\bf 1},  ({\bf H}-\lambda {\bf I})^{-1} {\bf 1} \rangle - \frac{1-\rho}{\rho} = 0 \\
 & \Rightarrow &\frac{1}{2L}\sum \frac{\langle {\bf 1},v_i\rangle^2}{\lambda_i-\lambda} - \frac{1-\rho}{\rho \lambda} = h(\lambda) =0
\end{eqnarray*}
where we use the identity ${\bf 1} = \sum \langle {\bf 1}, v_i \rangle v_i$ and therefore that $({\bf H} - \lambda {\bf I})^{-1} {\bf 1} = \sum \frac{\langle {\bf 1},  v_i \rangle}{\lambda_i - \lambda}  v_i$; here, $\lambda_i$ and $ v_i$ are the eigenvalues and (orthogonal) eigenvectors of the unperturbed operator ${\bf H}$.
Notice that in the last expression $h(\lambda)$ is a Herglotz
function\footnote{Sometimes called a Nevanlinna or Nevanlinna-Pick
  function.} --- an analytic function that is real on the real axis and  
maps the open upper half-plane to itself --- for $\rho \in [0,1].$  It
is well known that the zeroes and poles of a Herglotz function are
real, implying that the eigenvalues of the Rubinstein-Sternberg model for 
$\rho \in [0,1]$ are real (see Simon\cite{Simon2} page 920 for a 
more detailed discussion of Herglotz functions).  
This observation is analogous to Lemma 5.2 in the paper of
Freitas\cite{F95}, where reality of the eigenvalues for $\rho \in [0,1]$
is established using a combination of identities derived from the
original equation. 
In later work Freitas \cite{F99} considers
a general rank one perturbation of a self-adjoint operator: $\widetilde
{\bf H} = {\bf H} + |a \rangle \langle b| $, and associates to each
eigenvalue $\lambda_i$ a signature given by ${\rm sgn} (\langle b,v_i \rangle \langle
v_i,a\rangle)$, where $v_i$ is the eigenfunction of the unperturbed
problem --- see, in particular, section 3 of \cite{F99}. The condition that
all of these signatures are the same is equivalent to requiring
that the Aronszajn-Krein function is Herglotz.       

This calculation shows that, despite the fact that the linearized operator is not self-adjoint, the spectrum is purely real for $\rho \in [0,1]$. 
Now let us assume that we understand the spectrum of ${\bf H}_0 = {\bf H}$, the unperturbed operator, in particular that we know $n_+({\bf H})$,
the number of positive eigenvalues of the unperturbed operator.  Let us consider doing a homotopy
in the parameter $\rho$. Since we have shown that the eigenvalues are real the only way that an eigenvalue can move from the 
right half-plane to the left half-plane (or vice-versa) is by passing through the origin. We can detect when this occurs by taking 
$\lambda=0$ in equation (\ref{eqn:EigCond}), which becomes
\[
1 = \frac{\rho}{2L} \langle f'(u), {\bf H}^{-1} {\bf 1}\rangle = \frac{\rho}{2L} \langle {\bf 1}, {\bf 1} \rangle = \rho  
\]
Here we have used the fact that ${\bf H} {\bf 1} = f'(u)$ so that ${\bf 1} = {\bf H}^{-1} f'(u)$. (We are also assuming that ${\bf H}$ is invertible. Minor changes are required in the case that ${\bf H}$ has a kernel; see Remark 1.) This calculation shows that the unique value of $\rho$ for 
which ${\bf H}_\rho$  has a zero eigenvalue is $\rho=1$.

We are now in a position to count the number of positive eigenvalues of $\widetilde {\bf H}$ using a continuation argument. The operator $\widetilde{\bf H}$ is bounded above and, by standard arguments, has a finite number of positive eigenvalues. We assume that the number of 
positive eigenvalues of the unperturbed operator is given by $n_+({\bf
  H})=k$: at $\rho=0$ there are $k$ positive eigenvalues and the remaining eigenvalues are 
negative. For $\rho\in(0,1)$ the kernel of ${\bf H}_\rho$ is empty, so no eigenvalues cross from the left half-line to the right. 
At $\rho=1$ there is an eigenvalue at $\lambda=0$, which either came from the left half-line or the right half-line. We can determine which by computing $\frac{d\lambda}{d\rho}$ and evaluating at $\lambda=0$. Doing so we find that 
\[
\frac{d\lambda}{d\rho}= -\frac{2L}{\langle f'(u), {\bf H}^{-2} {\bf 1}\rangle} = -\frac{2L}{\langle {\bf 1},  {\bf H}^{-1} {\bf 1}\rangle}.
\]
If $\langle {\bf 1}, {\bf H}^{-1} {\bf 1}\rangle >0$ the eigenvalue is moving from the positive half-line to the negative, while 
if $\langle {\bf 1}, {\bf H}^{-1} {\bf 1}\rangle <0$ it is moving from the negative to the positive half-line. 

Finally, the assumption that the
unperturbed operator ${\bf H}$ is invertible implies that the
perturbed operator
$\widetilde {\bf H}$ has at most a one dimensional kernel, since if
$\widetilde {\bf H}$ had a higher dimensional kernel one of the
eigenfunctions could be chosen to be orthogonal to $f'(u)$ and would
thus lie in the kernel of the unperturbed operator ${\bf H}$.  
This completes the proof of the following theorem:
\begin{thm}
Suppose that the unperturbed operator ${\bf H} = \partial_{xx} + f'(u)$ is non-singular and has $n_+({\bf  H})=k$ positive eigenvalues. The perturbed operator 
$\widetilde {\bf H}$ has a simple kernel and $n_+({\bf {\widetilde H}})$, the number of positive eigenvalues of the linearized operator, is given by 
\[
n_+({\bf {\widetilde H}}) = \left\{ \begin{array}{ll} k, & \qquad \langle {\bf 1} ,{\bf H}^{-1} {\bf 1}\rangle <0 \\
k-1, & \qquad \langle {\bf 1},{\bf H}^{-1} {\bf 1}\rangle >0 \end{array}\right.
\] 
Thus a necessary and sufficient condition for spectral stability is that $n_+({\bf H})=0$ (from which it follows that $\langle {\bf 1},{\bf H}^{-1} {\bf 1}\rangle <0$) or $n_+({\bf H})=1$ and $\langle {\bf 1},{\bf H}^{-1} {\bf 1}\rangle >0.$
\label{thm:stab1}
\end{thm}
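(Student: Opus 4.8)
The plan is to follow a homotopy in the coupling constant $\rho$, from $\rho=0$ (where $\widetilde{\bf H}_0={\bf H}$) to $\rho=1$ (the Rubinstein--Sternberg operator $\widetilde{\bf H}$), tracking the positive spectrum along the way. First I would record the spectral splitting forced by the rank-one structure: any eigenvector $v$ of ${\bf H}$ with $\langle f'(u),v\rangle=0$ remains an eigenvector of ${\bf H}_\rho$ with its eigenvalue unchanged, and since ${\bf H}$ is non-singular that eigenvalue is nonzero and can never reach the origin; all the remaining (``moving'') eigenvalues are the solutions of the Aronszajn--Krein condition \eqref{eqn:EigCond}, which, after substituting the identity ${\bf H}{\bf 1}=f'(u)$ and dividing by $\lambda$, becomes the equation $h(\lambda)=0$ for the function $h$ displayed just above the theorem.

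The second step is to use this form to confine the spectrum to the real axis. For $\rho\in(0,1]$ one has $(1-\rho)/\rho\ge 0$, so $h$ is a sum with non-negative coefficients of the functions $\lambda\mapsto(\lambda_i-\lambda)^{-1}$ and $\lambda\mapsto-\lambda^{-1}$, each of which carries the open upper half-plane into itself; hence $h$ is Herglotz and its zeros are real. Together with self-adjointness at $\rho=0$ this shows the spectrum of ${\bf H}_\rho$ is real for every $\rho\in[0,1]$, so the only way $n_+$ can jump along the homotopy is for an eigenvalue to pass through $\lambda=0$. Setting $\lambda=0$ in \eqref{eqn:EigCond} and using ${\bf 1}={\bf H}^{-1}f'(u)$ gives $1=\frac{\rho}{2L}\langle{\bf 1},{\bf 1}\rangle=\rho$, so $\rho=1$ is the unique parameter value at which $0\in\operatorname{spec}(\widetilde{\bf H}_\rho)$; moreover at $\rho=1$ the vector ${\bf H}^{-1}{\bf 1}$ is readily checked to be a null vector, so $0$ is genuinely attained. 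Consequently $n_+({\bf H}_\rho)\equiv k$ on $[0,1)$, and the whole question reduces to the single crossing at $\rho=1$.

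The third step reads off the direction of that crossing. Differentiating the eigenvalue relation in $\rho$ and evaluating at $(\rho,\lambda)=(1,0)$ gives $\frac{d\lambda}{d\rho}=-2L/\langle{\bf 1},{\bf H}^{-1}{\bf 1}\rangle$; if $\langle{\bf 1},{\bf H}^{-1}{\bf 1}\rangle<0$ the eigenvalue is increasing through the origin, so it was negative for $\rho$ just below $1$ and no positive eigenvalue is lost, while if $\langle{\bf 1},{\bf H}^{-1}{\bf 1}\rangle>0$ it is decreasing, so exactly one previously-positive eigenvalue has descended to $0$ and $n_+$ drops by one. Simplicity of the kernel at $\rho=1$ is again a consequence of the rank-one structure: a two-dimensional null space would contain a vector orthogonal to $f'(u)$, hence in $\ker{\bf H}=\{0\}$. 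Finally, spectral stability ($n_+(\widetilde{\bf H})=0$) holds precisely when $k=0$ --- in which case ${\bf H}^{-1}$ is negative definite and $\langle{\bf 1},{\bf H}^{-1}{\bf 1}\rangle<0$ is automatic --- or when $k=1$ and $\langle{\bf 1},{\bf H}^{-1}{\bf 1}\rangle>0$, which gives exactly the stated criterion.

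The step I expect to bear the real weight is the passage to the Herglotz function: it is the substitution ${\bf H}{\bf 1}=f'(u)$ that turns the Aronszajn--Krein eigenvalue equation into a genuinely Herglotz equation on $(0,1]$, and hence guarantees reality of the spectrum along the homotopy. Without reality the continuation argument collapses, since eigenvalues could leave and re-enter the right half-plane without ever crossing $0$. Everything else --- continuity of the (finite) positive spectrum in $\rho$, the first-order perturbation formula for $d\lambda/d\rho$, and the null-space count --- is routine once reality is in hand; one should only note in passing that the non-generic case $\langle{\bf 1},{\bf H}^{-1}{\bf 1}\rangle=0$, where the crossing at $\rho=1$ is tangential, is tacitly excluded by the statement.
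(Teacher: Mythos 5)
Your proposal is correct and follows essentially the same route as the paper: the Aronszajn--Krein eigenvalue condition combined with the identity ${\bf H}{\bf 1}=f'(u)$ to produce a Herglotz function (forcing real spectrum for $\rho\in[0,1]$), the homotopy argument locating $\rho=1$ as the unique zero-crossing, the sign of $d\lambda/d\rho=-2L/\langle{\bf 1},{\bf H}^{-1}{\bf 1}\rangle$ to determine the direction, and the rank-one argument for simplicity of the kernel. Your explicit verification that ${\bf H}^{-1}{\bf 1}$ is a null vector at $\rho=1$, and your remark that the degenerate case $\langle{\bf 1},{\bf H}^{-1}{\bf 1}\rangle=0$ is tacitly excluded, are both sound additions.
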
 

\begin{remarks}
The case where the unperturbed operator ${\bf H}$ has a kernel is
somewhat more involved but can be addressed similarly. There one must do another 
perturbation calculation near $\rho=0$ to understand how the zero eigenvalue(s) move with $\rho$ in order to compute $n_+({\bf H}_\rho)$ 
for $\rho$ small but non-zero. From there the calculation is the same: the number of positive eigenvalues can stay the same or decrease 
by one, and this is determined by the sign of $\langle {\bf 1},{\bf
  H}^{-1} {\bf 1} \rangle$.  Here ${\bf H}$ is singular but ${\bf 1}$
is in the range of ${\bf H}$, so ``${\bf H}^{-1} {\bf 1}$" may be interpreted
in the sense of the Moore-Penrose pseudo-inverse. 
\end{remarks}
As we remarked earlier there is actually a one-parameter family of
stationary solutions to the nonlocal equation Eqn. \eqref{eqn:allen-cahn}. 
We now give an alternative characterization of the stability criterion
by expressing it in terms of the dependence of the integrated reaction
rate on the mass of the solution.
\begin{corr}
Two alternative characterizations of the stability criterion in Theorem
(\ref{thm:stab1}) are as follows:
\begin{enumerate}
\item Let $M$ denote the total mass of the
solution
\[
M = \int_{-L}^L u \, dx 
\]
and $R$ denote the total reaction rate 
\[
R = \int_{-L}^L f(u) \, dx.
\]
Assume that the family of stationary solutions can locally be
parameterized by the total mass $M$, and that the number of positive
eigenvalues of the unperturbed operator is given by $n_+({\bf H})=k$. Then the dimension of the
unstable manifold is given by 
 \[
n_+({\bf {\widetilde H}}) = \left\{ \begin{array}{ll} k, & \qquad \frac{dR}{dM} <0 \\
k-1, & \qquad \frac{dR}{dM}>0 \end{array}\right.
\] 
\label{thm:stab2}
In particular a necessary condition for stability is that the total reaction rate must be an increasing function
of the total mass. 
\item Suppose that the stationary solution is given by the
quadrature
\[
\int_{\mu_-}^u \frac{dy}{\sqrt{2 E + 2 \kappa y - 2F(y)}} = x+L 
\]
for appropriate constants $E,\kappa$.
Let $\mu_-$ and $\mu_+$ be
two turning points for the quadrature: i.e. $\mu_{\pm}$ are simple roots of
$2 E + 2 \kappa u -2F(u)=0$ such that $2E + 2 \kappa u - 2F(u) >0$ for $u \in
(\mu_-,\mu_+)$, with $F$ the antiderivative of the reaction rate,
$F'(u)=f(u)$. 
Define the period type integrals 
\begin{align*}
& P(E,\kappa) = \frac{1}{2}\oint_{\Gamma} \frac{du}{\sqrt{2E + 2 \kappa u - 2F(u)}} \\
& M(E,\kappa) = \frac{1}{2}\oint_{\Gamma}  \frac{u du}{\sqrt{2E + 2 \kappa u - 2F(u)}} \\
& R(E,\kappa) = \frac{1}{2} \oint_{\Gamma}  \frac{f(u) du}{\sqrt{2E + 2 \kappa u - 2F(u)}},
\end{align*}
where $\Gamma$ is a simple closed contour containing the branch
cut along the real axis from $\mu_-$ to $\mu_+$.
Then the dimension of the unstable manifold is given by
\[
n_+({\bf {\widetilde H}}) = \left\{ \begin{array}{ll} k, & \qquad
                                                 \tau   <0 \\
k-1, & \qquad \tau >0 \end{array}\right.
\]
where $\tau$ is defined by 
\[
\tau = \frac{\frac{\partial
                                                       M}{\partial E}
                                                       \frac{\partial
                                                       P}{\partial \kappa} -
                                                      \frac {\partial
                                                       M}{\partial
                                                       \kappa}\frac{\partial
                                                       P}{\partial
                                                       E}}{\frac{\partial
                                                       R}{\partial E}
                                                       \frac{\partial
                                                       P}{\partial
                                                       \kappa} -
                                                       \frac{\partial
                                                       R}{\partial \kappa}
                                                       \frac{\partial
                                                       P}{\partial E}} 
\] and represents the rate of change of $M$ divided by the rate of
change of $R$ along the one-parameter family of stationary solutions. 
\end{enumerate}
\end{corr}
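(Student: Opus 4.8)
The plan is to prove the two characterizations in order; part (1) is the essential computation, and part (2) follows from it by passing to the quadrature representation of the stationary solutions.

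For part (1): I would parameterize the one-parameter family of stationary solutions by the total mass $M$, writing $u(x;M)$, so that each member satisfies $u_{xx} + f(u) - \kappa(M) = 0$ on $[-L,L]$ with $u_x(\pm L) = 0$, where $\kappa(M) = \frac{1}{2L}\int_{-L}^{L} f(u)\,dx = \frac{R}{2L}$. Differentiating this identity with respect to $M$ and setting $w = \partial u/\partial M$ gives ${\bf H} w = \kappa'(M)\,{\bf 1}$ with $w_x(\pm L) = 0$, so $w = \kappa'(M)\,{\bf H}^{-1}{\bf 1}$. Pairing against the constant function and using $\langle {\bf 1},w\rangle = \frac{d}{dM}\int_{-L}^{L} u\,dx = 1$ yields $1 = \kappa'(M)\,\langle {\bf 1},{\bf H}^{-1}{\bf 1}\rangle$, hence
\[
\langle {\bf 1},{\bf H}^{-1}{\bf 1}\rangle \;=\; \frac{1}{\kappa'(M)} \;=\; 2L\,\frac{dM}{dR}.
\]
In particular $\langle {\bf 1},{\bf H}^{-1}{\bf 1}\rangle$ and $dR/dM$ have the same sign, and part (1) follows at once from Theorem \ref{thm:stab1}. (If ${\bf H}$ is singular the same argument applies with ${\bf H}^{-1}$ read as the Moore--Penrose pseudo-inverse, since ${\bf 1}$ always lies in the range of ${\bf H}$; this is the situation of Remark 1.)

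For part (2): multiplying $u_{xx} + f(u) - \kappa = 0$ by $u_x$ and integrating once produces the first integral $\frac12 u_x^2 + F(u) - \kappa u = E$, so $u_x = \pm\sqrt{2E + 2\kappa u - 2F(u)}$ and the stated quadrature follows. The Neumann conditions force $u$ to run monotonically between two simple turning points $\mu_\pm$ of $2E + 2\kappa u - 2F(u)$, and the requirement that one such monotone arc occupy the length $2L$ is exactly the constraint $P(E,\kappa) = 2L$ (for an $n$-front solution each arc has length $2L/n$, but $n$ is constant along the family and drops out of what follows). The self-consistency relation $2L\kappa = R$ is automatic: integrating the ODE over $[-L,L]$ and using $u_x(\pm L) = 0$ gives $\int_{-L}^{L} f(u)\,dx = 2L\kappa$. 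Changing variables from $x$ to $u$ in $M = \int_{-L}^{L} u\,dx$ and $R = \int_{-L}^{L} f(u)\,dx$, and noting that $\frac12\oint_{\Gamma}$ of a function divided by $\sqrt{2E+2\kappa u - 2F(u)}$ around the branch cut equals the corresponding real integral from $\mu_-$ to $\mu_+$, identifies these with $M(E,\kappa)$ and $R(E,\kappa)$. The family is then the level curve $P(E,\kappa) = 2L$ in the $(E,\kappa)$ plane; parameterizing it by a parameter $s$ and using $P_E E' + P_\kappa \kappa' = 0$, i.e. $(E',\kappa') = c\,(P_\kappa,-P_E)$, one finds $\frac{dM}{ds} = c\,(M_E P_\kappa - M_\kappa P_E)$ and $\frac{dR}{ds} = c\,(R_E P_\kappa - R_\kappa P_E)$, so that $\frac{dM}{dR} = \tau$. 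Combining with part (1) and Theorem \ref{thm:stab1} gives the stated count.

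I expect the only real work to be the bookkeeping in part (2) rather than any conceptual difficulty: one must track the turning-point structure and the orientation of $\Gamma$ carefully (including the multi-front case), justify differentiation under the integral sign in the period-type integrals, and invoke a mild nondegeneracy hypothesis — essentially $(P_E,P_\kappa)\neq(0,0)$ together with nonvanishing of the denominator $R_E P_\kappa - R_\kappa P_E$ — so that the level set $P(E,\kappa)=2L$ is a smooth curve near the solution of interest and $\tau$ is well defined. Everything else reduces to the single identity $\langle {\bf 1},{\bf H}^{-1}{\bf 1}\rangle = 2L\,dM/dR$ established in part (1).
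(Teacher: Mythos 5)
Your proposal is correct and follows essentially the same route as the paper: the core step in both is to differentiate the stationary equation along the one-parameter family, recognize the derivative as a multiple of ${\bf H}^{-1}{\bf 1}$, and pair with ${\bf 1}$ to obtain $\langle {\bf 1},{\bf H}^{-1}{\bf 1}\rangle = 2L\,\frac{dM}{dR}$, with part (2) then following from the chain rule on the level curve $P(E,\kappa)=2L$. The only cosmetic difference is that you parameterize by $M$ directly in part (1) where the paper uses arc length in the $(E,\kappa)$ plane throughout; the sign conclusion and the identification of the ratio with $\tau$ are identical.
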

\begin{proof}
The proof consists of computing the family of stationary
solutions and observing that translation along the family of
stationary solutions generates the appropriate element of the range of
${\bf H}$ needed to compute $\langle {\bf 1},{\bf H}^{-1} {\bf 1} \rangle$. 

First, we show that we can find a one-parameter family of stationary
solutions through quadrature: see, for instance, the text of Landau
and Lifshitz\cite{LL}. A stationary solution $u$ must satisfy 
\[
0=u_{xx} + f(u) - \frac{1}{2L} \int_{-L}^L f(u) dx = u_{xx} + f(u) - \kappa
\] 
for some suitable $\kappa$; multiplying by $u_x$ we find that
\[ 0 = u_x (u_{xx} + f(u) - \kappa) = \frac{d}{dx} \left( \frac{1}{2} u_x^2 + F(u) - \kappa u \right) \Rightarrow  \frac{1}{2} u_x^2 + F(u) - \kappa u  = E,
\]
for some integration constant $E$, and $F'(u)=f(u)$. Solving for $u_x$ yields
\begin{equation}
\frac{du}{\sqrt{2 E + 2 \kappa u - 2F(u)}} = dx.
\end{equation}
The Neumann boundary condition $\frac{du}{dx} = 0$ is satisfied at a turning point of the function $\sqrt{2 E + 2\kappa u - 2F(u)}$; therefore we integrate along the real axis between two points $\mu_- = u(-L)$, $\mu_+ = u(L)$, at which $\sqrt{2 E + 2\kappa u - 2F(u)} = 0$:
\begin{align}
& \int_{\mu_-}^{\mu_+} \frac{du}{\sqrt{2 E + 2\kappa u - 2F(u)}} = \int_{-L}^L \, dx = 2L
\label{eqn:defP_real_axis}
\end{align}
or
\begin{align}
& P(E,\kappa) = \frac{1}{2}\oint_{\Gamma} \frac{du}{\sqrt{2 E + 2 \kappa u - 2F(u)}} = 2L,
\label{eqn:defP_contour_int}
\end{align}
where $\Gamma$ is a simple closed contour that loops around the segment between $\mu_-$ and $\mu_+$.  
The factor of $\frac12$ in the second equation arises in the
usual way due to the fact that the contributions from the top and the
bottom of the square root branch cut add. 
Condition (\ref{eqn:defP_contour_int}) defines a curve in the $(E,\kappa)$ plane along which we have a
stationary solution, defined locally by the vector field
\[
dE \frac{\partial P}{\partial E} + d\kappa \frac{\partial P}{\partial \kappa} =0.
\] 
If we choose to parameterize the curve $(E(s),\kappa(s))$ in the $(E,\kappa)$
plane by arc length $s$ then we can take 
\begin{align}
& \frac{dE}{ds} = -\frac{P_\kappa}{\sqrt{P_\kappa^2 + P_E^2}} \label{eqn:param1_dEds}
  \\
& \frac{d\kappa}{ds} = \frac{P_E}{\sqrt{P_\kappa^2 + P_E^2}}.
\label{eqn:param2_dkds}
\end{align}
Note that along this curve we have the identities 
\begin{align*}
&  R(E(s),\kappa(s)) = \frac12 \oint_{\Gamma} \frac{f(u) du}{\sqrt{2 E + 2 \kappa u - 2
  F(u)}} = \int_{-L}^L f(u(x;s)) dx \\
& M(E(s),\kappa(s)) = \frac12 \oint_{\Gamma} \frac{u du}{\sqrt{2 E + 2 \kappa u - 2
  F(u)}} = \int_{-L}^L u(x;s) dx \\
& P(E(s),\kappa(s))=2L. 
\end{align*}
Also note that one has the identity $\kappa P(E,\kappa) =
R(E,\kappa)$, so one could in principle eliminate one of these
quantities although we have chosen not to do so here.  

Having found a family of stationary solutions parameterized by the arc length, $u(x;s)$, we now proceed to compute the quantity $\langle {\bf 1},{\bf H}^{-1} {\bf 1}\rangle$ in terms of $M$ and $R$. 
First, take the equation for the stationary solution 
\[
u_{xx}(x;s) + f(u(x;s)) - \frac{1}{2L} \int_{-L}^L f(u(x;s))\, dx =0
\]
and differentiate with respect to the arc length parameter $s$, giving
\begin{equation}
u_{sxx} + f'(u) u_s -\frac{1}{2L} \int_{-L}^L f'(u) u_s \, dx =0
\label{eqn:stationary_diff_wrt_s}
\end{equation} 
which we recognize as (recalling that ${\bf H} v = v_{xx} + f'(u) v$)
\[
{\bf H} u_s = \frac{1}{2L} \frac{dR}{ds}. 
\] 
The right-hand side is a constant, thus we have 
\[
u_s = \left( \frac{1}{2L} \frac{dR}{ds}\right)   {\bf H}^{-1} {\bf 1}. 
\]
Integrating this identity gives 
\begin{equation}
 \frac{1}{2L} \frac{dR}{ds} \langle {\bf 1}, {\bf
  H}^{-1} {\bf 1} \rangle = \langle {\bf 1},u_s \rangle = \int_{-L}^L u_s \, dx = \frac{dM}{ds}
  \label{eqn:int_identity}
\end{equation}
or 
\begin{equation}
\langle {\bf 1},{\bf H}^{-1} {\bf 1}\rangle = 2L \frac{\frac{dM}{ds}}{\frac{dR}{ds}}.
\end{equation}
Since $L > 0$ is positive, the quantity $ \langle {\bf 1},{\bf H}^{-1}
{\bf 1}\rangle $ is positive if $R$ increases with increasing $M$ and is
negative if $R$ decreases with increasing $M$.  Applying the chain
rule and Equations (\ref{eqn:param1_dEds}) and (\ref{eqn:param2_dkds})  we find that 
\begin{equation}
\langle {\bf 1},{\bf H}^{-1} {\bf 1}\rangle 
= 2L \frac{\frac{\partial M}{\partial E} \frac{dE}{ds} +
                                                       \frac{\partial
                                                         M}{\partial
                                                         \kappa}
                                                       \frac{d\kappa}{ds}
                                                     }{\frac{\partial
                                                         R}{\partial E}
                                                       \frac{d E}{ds}
                                                       +
                                                       \frac{\partial
                                                         R}{\partial
                                                         \kappa}
                                                       \frac{d\kappa}{ds}}=
                                                     2L
                                                     \frac{\frac{\partial
                                                         M}{\partial
                                                         E}
                                                       \frac{\partial
                                                         P}{\partial \kappa} -
                                                       \frac{\partial
                                                         M}{\partial
                                                         \kappa}
                                                       \frac{\partial
                                                         P}{\partial
                                                         E}}{\frac{\partial
                                                         R}{\partial E}
                                                       \frac{\partial
                                                         P}{\partial
                                                         \kappa} -
                                                       \frac{\partial
                                                         R}{\partial
                                                         \kappa}
                                                       \frac{\partial
                                                         P}{\partial E}} = 2L \tau. 
\end{equation}
\end{proof}

While we are not aware of previous results of this type for
dissipative equations there are a number of results for the stability
of nonlinear dispersive waves that relate the index of some linearized
operator to the sign of the derivative of some conserved quantity with
respect to a parameter, analogous to the result presented here. The
classical Vakhitov-Kolokolov criteria, relating the stability of
solitary wave solutions $\psi(x,t) = e^{i \omega t} \phi(x;\omega )$ to the
nonlinear Schr\"odinger equation 
\[
i \psi_i = -\psi_{xx} + g(|\psi|^2) \psi 
\]
to the sign of $\frac{d}{d\omega} \int |\phi|^2(x;\omega) dx$ falls into
this category \cite{VK, GSSI, GSSII, PW}.
The problem of the stability
of periodic solutions to nonlinear dispersive waves is perhaps even
more similar to the present case: in this situation the index is
determined by the signs of certain determinants of derivatives of
period integrals (see \cite{KD, BJ, BJK} for examples).

It is worth discussing the relationship of this result to the results
of Freitas\cite{F95}, who
relates the dimension of the unstable manifold to
the lap number of the underlying solution. Specifically Freitas shows
(see Theorems 5.1 and 5.13 of \cite{F95}) that the  dimension of the unstable manifold
is related to $m$, the number of extrema  of the underlying solution
$u$ in $(-L,L)$ via 
\begin{align*}
m \leq n_+(\widetilde {\bf H}) \leq m+2 
\end{align*}
Thus the dimension of the unstable manifold can differ from the
number of extrema by up to two. 
To summarize the reasoning, the unstable manifold of the unperturbed operator has dimension equal to the lap number, $l(w) = m+1$;  the corresponding dimension for the perturbed operator can then go up (to $m+2$) or down (to $m$) by at most one. 

Here, we determine this direction (whether up to $m+2$, or down to $m$), by computing the index of the unperturbed operator; 
in terms of $m$, our result states that
\begin{align*}
m \leq n_+(\widetilde {\bf H}) \leq m+1,  \qquad & {\rm if} \; \langle {\bf 1},{\bf H}^{-1} 
  {\bf 1}\rangle >0 \\
m+1 \leq n_+(\widetilde {\bf H}) \leq m+2 \qquad & {\rm if} \; \langle {\bf 1},{\bf H}^{-1} {\bf 1}\rangle < 0, 
\end{align*}
where the sign of $\langle {\bf 1}, {\bf H}^{-1} {\bf 1}\rangle$ can be replaced
by the sign of $\frac{dR}{dM}$ or any of the other equivalent forms
discussed previously.

\section{Discussion}
In conclusion, we have presented a general method of analyzing low-rank perturbations of self-adjoint operators. 
We show how to use a
simple idea of classical differential geometry (the envelope of a family of 
curves) to completely analyze the spectrum. 
When the rank of the perturbation is two, this allows us to view the system in a geometric way through a phase diagram in the perturbation strengths $(\rho_1, \rho_2)$. By locating constant eigenvalue and eigenvalue coincidence curves (both computable through simple formulas), we can determine where the perturbed operator is stable, and where double real eigenvalues bifurcate into complex pairs.
This latter situation (bifurcation into a complex pair) coincides with a poorly conditioned eigenvalue, 
which in turn signals that small changes in the perturbation parameter will yield large changes in the operator behavior. 

We used these techniques to analyze three problems of this form; a 
model of the oculomotor integrator due to Anastasio and Gad\cite{ag07}, a continuum version of the oculomotor integrator model, and a nonlocal model of phase separation due to Rubinstein and Sternberg\cite{RS92}. In the first two problems, the physical interpretation of our model (a neural network that must maintain a steady eye position in the absence of input) required that we identify (a) where the perturbed system had a specific eigenvalue and (b) where this particular eigenvalue would be poorly conditioned. 
Our results in \S 1 show that both (a) and (b) can only occur in proximity to a specific point on the $(\rho_1, \rho_2)$ plane, which was then easy to visualize. In \S 2 and \S 3, some portions of the model are not completely specified by biology (such as the vestibular-to-Purkinje connections), but must be chosen arbitrarily (even randomly). In this paper, we analyzed a few carefully chosen examples. But, the geometric method we describe here also gives us rapid way to survey a large family of such models; using such a survey to draw conclusions about the vestibular-to-cerebellar pathway is an area for future work.

The problem analyzed in \S 4 involves a rank one (rather than rank two) perturbation, and so a phase plane approach is not applicable. Instead, we systematically exploit the rank one nature of the perturbation to characterize stability of stationary solutions in terms of the unperturbed operator.  We further show how to construct a one-parameter family of stationary solutions, and relate the stability condition to the relative change in two integrated quantities (mass and reaction rate) as one travels along this family.

In analyzing these three problems, we have by no means exhausted the possible applications. 
For example, the eigenvalue problem in 
Eqn. \eqref{eqn:reduce_match} is very similar to the stability 
problem for spike solutions to activator-inhibitor models in the 
limit of slow activator diffusion\cite{F94,IW02} (although 
the problem we study here differs because the eigenvalue enters in a non-linear way). Similar models 
of reaction-diffusion equations with non-local interactions have 
arisen in a number of other contexts including population dynamics\cite{FG89}, 
runaway ohmic heating\cite{C81,L95I,L95II}, and microwave heating \cite{BK98}. 
Therefore, we anticipate that
 the techniques presented here should be applicable to 
understanding these problems.

\section*{Competing Interests}
The authors have no competing interests.

\section*{Data Accessibility} 
Not applicable: the paper contains sufficient detail to reproduce the results.

\section*{Authors' Contributions}
TJA, AKB, and JCB designed the project. TJA originated the integrator model described in \S \ref{sec:numerics}; AKB and JCB originated the continuum integrator model (\S \ref{sec:continuum}) and analyzed the Rubinstein-Sternberg model described in \S \ref{sec:RS_model}. AKB and JCB created figures; TJA, AKB and JCB wrote the paper.

\section*{Acknowledgements}
We would like to thank two anonymous reviewers whose comments have helped us to improve the manuscript. 

\section*{Funding}
JCB and AKB received support from 
grant NSF-DMS0354462, and JCB from NSF-DMS0807584.



\clearpage

\section*{Appendix}
\subsection*{Derivation of Equations \eqref{eqn:AKformula_explicit_Mat} and \eqref{eqn:AKformula_explicit_SelfAdj}}
Here we include details for the derivation of Eqn. \eqref{eqn:AKformula_explicit_Mat}, for matrices.
We will make repeated use of the matrix determinant lemma:
\begin{equation} 
\det (\AM + \vec u \vec v \,^t)  =  (1 + \vec v \,^t \AM^{-1} \vec u) \det \AM
\label{eqn:MatDetLem}
\end{equation}
and the Sherman-Morrison formula:
\begin{equation} 
(\AM + \vec u \vec v\, ^t)^{-1}  = \AM^{-1} - \frac{\AM^{-1} \vec u \vec v\,^t \AM^{-1}}{1 + \vec v\,^t \AM^{-1} \vec u}
\end{equation}

We first apply Eqn. \eqref{eqn:MatDetLem} with $\AM = \MM-\lambda {\bf I} + \rho_1 \vec f_1 \vec g_1^t$, 
$\vec u = \rho_2 \vec f_2$, and $\vec v = \vec g_2$, resulting in Eqn. \eqref{eqn:applyMatDetLem1}. We then apply Sherman-Morrison to the inverse matrix, resulting in Eqn. \eqref{eqn:applySherMorr}. Finally we apply Eqn. \eqref{eqn:MatDetLem} a second time, but with $\AM = \MM-\lambda {\bf I}$, $\vec u = \rho_1 \vec f_1$, and $\vec v = \vec g_1$, and simplify to emphasize the polynomial form in $\rho_1, \rho_2$:
\begin{eqnarray}
\det(\widetilde{\bf M} - \lambda {\bf I}) & = & \det({\bf M} - \lambda {\bf I} + \rho_1 \vec f_1 {\vec g_1}^t +  \rho_2 \vec f_2 \vec g_2^t)
\nonumber \\
& = & \left( 1 + \rho_2 \vec g_2 ^t \left( {\bf M} - \lambda {\bf I} + \rho_1 \vec f_1 \vec g_1 ^t \right)^{-1} \vec f_2\right) \times \det({\bf M} - \lambda {\bf I} + \rho_1 \vec f_1 \vec g_1 ^t)
\label{eqn:applyMatDetLem1}\\
& = & \left( 1 + \rho_2 \vec g_2 ^t \left( ({\bf M} - \lambda {\bf I})^{-1}  -  \frac{({\bf M} - \lambda {\bf I})^{-1} \rho_1 \vec f_1 \vec g_1^t ({\bf M} - \lambda {\bf I})^{-1}}{1 + \vec g_1^t ({\bf M} - \lambda {\bf I})^{-1} \rho_1 \vec f_1} \right) \vec f_2\right) \times \det({\bf M} - \lambda {\bf I} + \rho_1 \vec f_1 \vec g_1 ^t) 
\nonumber \\
\label{eqn:applySherMorr}\\
& = & \left( 1 + \rho_2 \vec g_2 ^t \left( ({\bf M} - \lambda {\bf I})^{-1}  -  \frac{({\bf M} - \lambda {\bf I})^{-1} \rho_1 \vec f_1 \vec g_1^t ({\bf M} - \lambda {\bf I})^{-1}}{1 + \vec g_1^t ({\bf M} - \lambda {\bf I})^{-1} \rho_1 \vec f_1} \right) \vec f_2\right) 
\nonumber \\
& & \times \left( 1 + \vec g_1^t ({\bf M} - \lambda {\bf I})^{-1} \rho_1 \vec f_1 \right)  \times \det({\bf M} - \lambda {\bf I})
\label{eqn:applyMatDetLem2}\\
& = & \det({\bf M} - \lambda {\bf I}) \times \left[ 1 + \rho_1 \vec g_1^t ({\bf M} - \lambda {\bf I})^{-1} \vec f_1 + \rho_2 \vec g_2 ^t \left( {\bf M} - \lambda {\bf I} \right)^{-1} \vec f_2 \right.
\label{eqn:afterAlgebra} \\
&  &  \left. + \; \rho_1 \rho_2 \left( ( \vec g_1^t ({\bf M} - \lambda {\bf I})^{-1} \vec f_1 ) ( \vec g_2^t ({\bf M} - \lambda {\bf I})^{-1} \vec f_2 ) - ( \vec g_2 ^t ({\bf M} - \lambda {\bf I})^{-1}  \vec f_1 ) ( \vec g_1^t ({\bf M} - \lambda {\bf I})^{-1} \vec f_2 )  \right) \right]
\nonumber 
\end{eqnarray}
Using the cofactor formula $\cof^t {\bf A} = \det({\bf A}) {\bf A}^{-1}$ to replace each instance of $\det({\bf M}-\lambda {\bf I}) ({\bf M}-\lambda {\bf I})^{-1}$ with $\cof^t({\bf M}-\lambda {\bf I})$ gives the formula in Eqn \eqref{eqn:AKformula_explicit_Mat}.

%
To derive Eqn. \eqref{eqn:AKformula_explicit_SelfAdj}, use the spectral decomposition for self-adjoint operators:
\[ {\bf M} - \lambda {\bf I} = \sum_{i} (\lambda_i - \lambda) \vec \phi_i \vec \phi_i^t
\]
where $\lambda_i$ and $\vec \phi_i$ are the eigenvalue and eigenvectors of the unperturbed operator; substitute into Eqn. \eqref{eqn:afterAlgebra} and use the orthogonality of eigenfunctions ($\vec \phi_i^t \vec \phi_j = 0$ for $i \not=j$).

\subsection*{Derivation of Equation \eqref{eqn:const_evalue_wInv}}
We fill in a few more details for calculating eigenvalues of:
\[
\widetilde{\bf M}\vec w = {\bf M} \vec w + \rho_1 \vec f_1 \langle \vec g_1, \vec w\rangle + \rho_2 \vec f_2 \langle \vec g_2, \vec w\rangle = \lambda \vec w 
\]
where $\widetilde{\bf M}$ is an operator with compact resolvent.

First, act on this by the resolvent of the unperturbed operator, $R_{\lambda} = ({\bf M} - \lambda {\bf I})^{-1}$:
\begin{eqnarray}
0 & = & \left( {\bf M} -\lambda {\bf I} \right) \vec w + \rho_1 \vec f_1 \langle \vec g_1, \vec w\rangle + \rho_2 \vec f_2 \langle \vec g_2, \vec w\rangle  \Rightarrow \\
R_{\lambda} \, 0 & = & R_{\lambda} ({\bf M} -\lambda {\bf I}) \vec w + \rho_1 R_{\lambda} \vec f_1 \langle \vec g_1, \vec w\rangle + \rho_2 R_{\lambda} \vec f_2 \langle \vec g_2, \vec w\rangle \\
0 & = & \vec w + \rho_1 R_{\lambda} \vec f_1 \langle \vec g_1, \vec w\rangle + \rho_2 R_{\lambda} \vec f_2 \langle \vec g_2, \vec w\rangle \Rightarrow\\
-\vec w & = &  \rho_1 R_{\lambda} \vec f_1 \langle \vec g_1, \vec w\rangle + \rho_2 R_{\lambda} \vec f_2 \langle \vec g_2, \vec w\rangle
\end{eqnarray}
Now, act on the equation by both $\vec g_1$ and $\vec g_2$, to yield two consistency conditions:
\begin{eqnarray}
-\langle \vec g_1, \vec w\rangle & = & \rho_1 \langle \vec g_1, R_{\lambda} \vec f_1 \rangle \langle \vec g_1, \vec w\rangle + \rho_2  \langle \vec g_1, R_{\lambda} \vec f_2 \rangle \langle \vec g_2, \vec w\rangle \label{eqn:g1w_coupled}\\
-\langle \vec g_2, \vec w\rangle & = & \rho_1 \langle \vec g_2, R_{\lambda} \vec f_1 \rangle \langle \vec g_1, \vec w\rangle + \rho_2  \langle \vec g_2, R_{\lambda} \vec f_2 \rangle \langle \vec g_2, \vec w\rangle \Rightarrow \\
\langle \vec g_2, \vec w\rangle  & = & \frac{-\rho_1 \langle \vec g_2, R_{\lambda} \vec f_1 \rangle \langle \vec g_1, \vec w\rangle}{1 + \rho_2  \langle \vec g_2, R_{\lambda} \vec f_2 \rangle}  \label{eqn:g2w}
\end{eqnarray}
Substituting Eqn. \eqref{eqn:g2w} into Eqn. \eqref{eqn:g1w_coupled}  and dividing by the now common factor $\langle \vec g_1, \vec w\rangle$, will yield a single polynomial equation for $\rho_1$ and $\rho_2$:
\begin{eqnarray}
0 & = & 1 + \rho_1 \langle \vec g_1, R_{\lambda} \vec f_1 \rangle + \rho_2  \langle \vec g_2, R_{\lambda} \vec f_2 \rangle + \rho_1 \rho_2 \left(\langle \vec g_1, R_{\lambda} \vec f_1 \rangle  \langle \vec g_2, R_{\lambda} \vec f_2 \rangle - \langle \vec g_1, R_{\lambda} \vec f_2 \rangle  \langle \vec g_2, R_{\lambda} \vec f_1 \rangle\right) 
\end{eqnarray}
When $\vec g_1 = \vec g_2$, the final $\rho_1 \rho_2$ term is zero; this is what happens in the continuum example presented in \S 3. 

We now explain how to apply this formalism to the eigenvalue problem in \S 3:
\begin{eqnarray}
\beta \left( \Delta x\right)^2 \psi_{xx} + (-1 + 3 \beta) \psi + \frac{\rho_1\langle \psi,1\rangle}{\Delta x (\lambda+1)} \delta(x-x_1)+ \frac{\rho_2\langle \psi,1\rangle}{\Delta x (\lambda+1)} \delta(x-x_2) & = & \lambda \psi, ~~~~
\psi(0)=0=\psi(L) \nonumber \\
\label{reduce_appendix}
\end{eqnarray}
Here, ${\bf M} =  \beta \left( \Delta x \right)^2 \psi_{xx} +  (-1 + 3 \beta) \psi$ and (as we expect negative eigenvalues) we will act on Eqn. \eqref{reduce_appendix} with the resolvent operator $R_{\lambda} \equiv  \left( \beta \left( \Delta x \right)^2 \psi_{xx} +  (-1 + 3 \beta)\psi - \lambda \psi \right)^{-1}$. Here $\vec f_1 \propto \delta(x - x_1)$ and $\vec f_2 \propto \delta(x- x_2)$; therefore $R_{\lambda} \vec f_1$ is equivalent to solving the Green's function problem
\[ \beta \left( \Delta x \right)^2 \psi_{xx} + (-1 + 3 \beta) \psi - \lambda \psi  =  \delta(x-x_1); \quad \psi(0)=\psi(1)=0 \]
Since $\psi$ solves the  PDE  $\psi_{xx} + \frac{-1 - \lambda + 3 \beta}{ \beta \Delta x^2}\psi = 0$ for any $x \not= x_1, x_2$, $\psi$ must have the following form:
\begin{eqnarray}
\psi(x) & = & \left\{ \begin{array}{l l} A \sin \omega x, & x < x_1 \nonumber \\
B \sin \omega x + C \cos \omega x, & x_1 < x < x_2 \label{eqn:psi_Greens}\\
D \sin(\omega (L-x)), & x > x_2 \nonumber
\end{array} \right.
\end{eqnarray}
where $\omega^2 = \frac{-1 - \lambda + 3 \beta}{ \beta (\Delta x)^2}$.
By linearity of the resolvent, and the fact that $\vec g_1 = \vec g_2$, we can solve them together: i.e. $\rho_1 \langle \vec g_1, R_{\lambda} \vec f_1 \rangle + \rho_2 \langle \vec g_2, R_{\lambda} \vec f_2 \rangle = \rho_1 \langle \vec g_1, R_{\lambda} \vec f_1 \rangle + \rho_2 \langle \vec g_1, R_{\lambda} \vec f_2 \rangle =\langle \vec g_1,  \rho_1 R_{\lambda} \vec f_1 + \rho_2  R_{\lambda} \vec f_2 \rangle = \langle \vec g_1,  R_{\lambda} (\rho_1 \vec f_1 + \rho_2  \vec f_2) \rangle$.

Therefore we solve the Green's function problem only once; $\psi$ must satisfy
the following conditions, which impose continuity of $\psi$ at $x_1, x_2$ and the appropriate jump of the derivatives:
\begin{eqnarray}
\psi^+(x_1) & = & \psi^-(x_1)   \label{eqn:psi_cond1}\\
\psi^+(x_2) & = & \psi^-(x_2)   \label{eqn:psi_cond2}\\
\psi_x^+(x_1) - \psi_x^-(x_1) & = & -\frac{\rho_1}{\beta (\Delta x)^3 (\lambda + 1)} = -\frac{\rho_1}{\beta^2 (\Delta x)^3 (3 - (\Delta x)^2 \omega^2)}  \label{eqn:psi_cond3}\\
\psi_x^+(x_2) - \psi_x^-(x_2) & = & -\frac{\rho_2}{\beta (\Delta x)^3  (\lambda + 1)} = -\frac{\rho_2}{\beta^2 (\Delta x)^3 (3 - (\Delta x)^2 \omega^2)}  \label{eqn:psi_cond4}
\end{eqnarray}
This yields a system of 4 equations in the 4 unknowns $A$, $B$, $C$, and $D$. Similarly, we can express the action of $\vec g_1$ on any function of the form  Eqn. \eqref{eqn:psi_Greens} as a vector inner product: in this case, 
\begin{eqnarray}
\langle 1, \psi \rangle & = & \mathbf{g}^T \mathbf{w},\\
\mathbf{g} & = & \frac{1}{\omega}\left[ \begin{array}{c} 1 - \cos \omega x_1\\
 \cos \omega x_1 - \cos \omega x_2 \\ 
 \sin \omega x_2 - \sin \omega x_1 \\ 1 - \cos(\omega (L-x_2)) 
\end{array} \right]; \qquad \mathbf{w} = \left[ \begin{array}{c} A\\B\\C\\D \end{array} \right]
\end{eqnarray}
Thus our final polynomial is $0 = 1 + \mathbf{g}^T \mathbf{w}$, where $\mathbf{w}$ is the vector of coefficients we obtained by solving Eqn. (\ref{eqn:psi_cond1}--\ref{eqn:psi_cond4}).

\clearpage

\section*{Figure captions}

 \begin{figure}[h!]
\begin{center}
\includegraphics[height=3.5in]{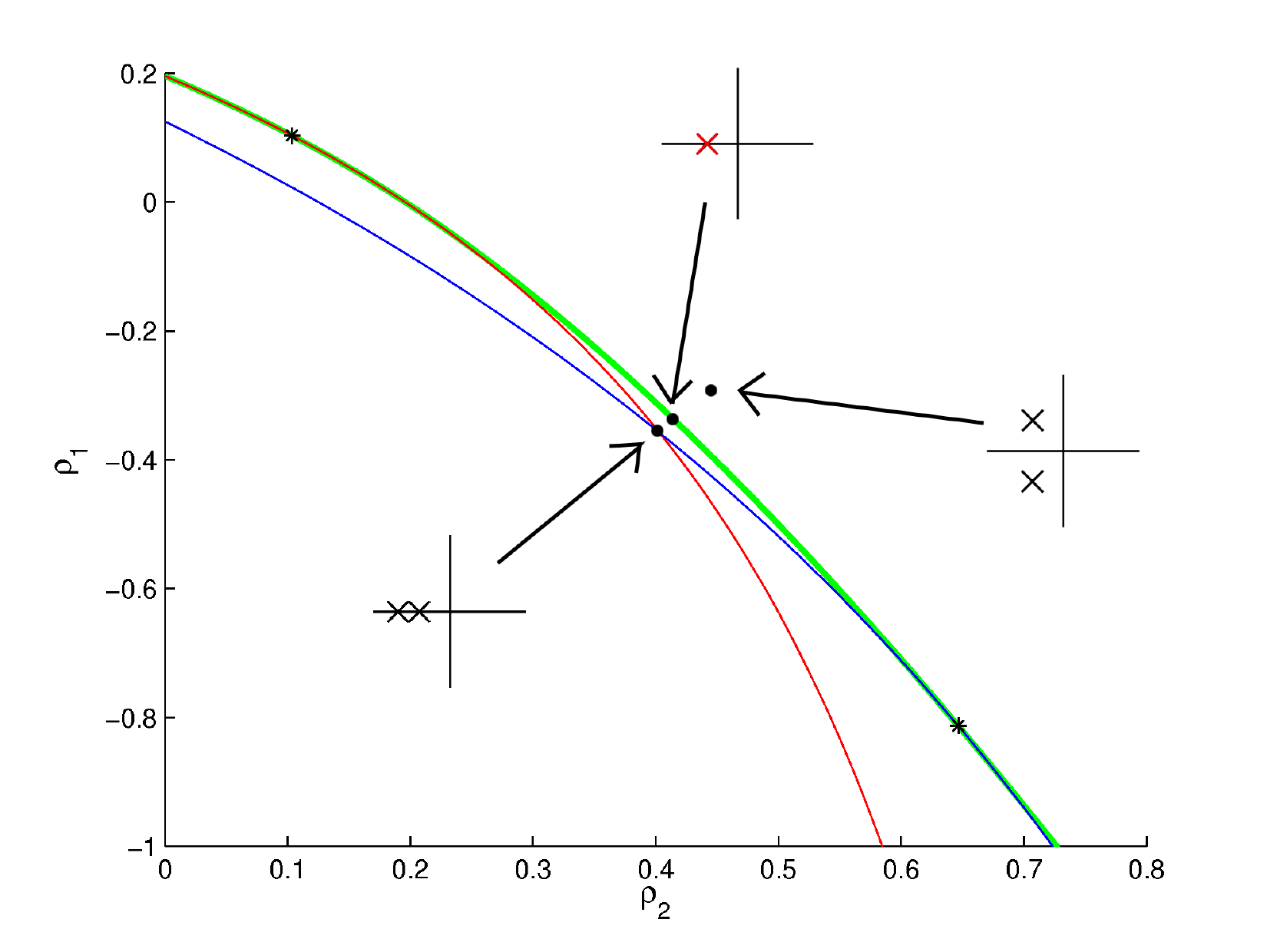}
\end{center}
\caption{ A schematic illustrating the bifurcation of eigenvalues across an envelope curve. The envelope curve (green bold solid) and two constant eigenvalue curves (blue and red light solid) are shown. The inset axes illustrate the relative positions of the eigenvalue pair in the vicinity of the bifurcation point.}
\label{fig:bifurcation}
\end{figure}

\begin{figure}[h!]
\begin{center}
\includegraphics[height=3in]{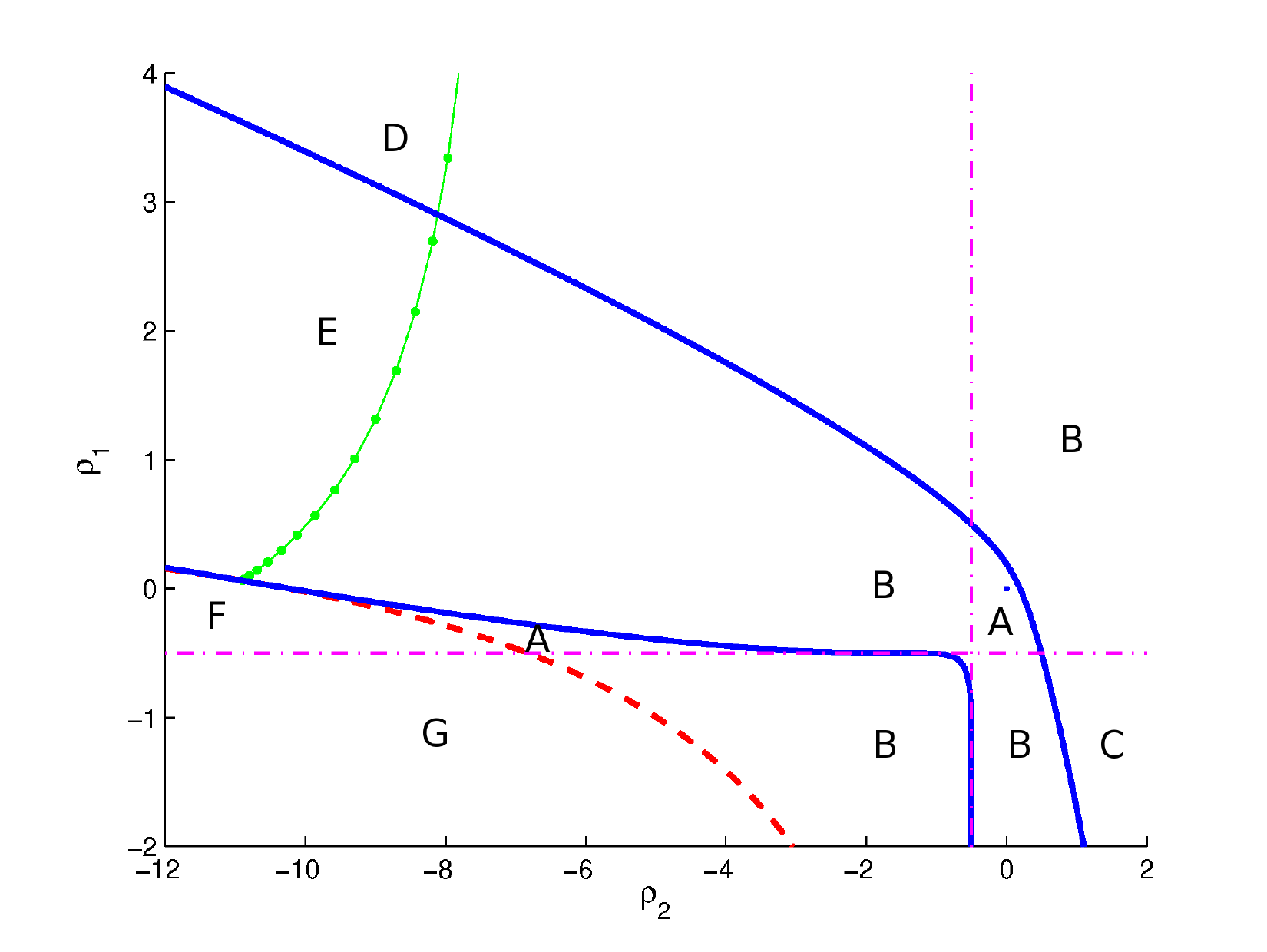}
\end{center}
\caption{The stability diagram in the $(\rho_2,\rho_1)$ plane 
for the model introduced in example 1. The bifurcation (blue), Hopf (green dotted), and zero eigenvalue (red dashed) curves are shown. The bifurcation curve also has a singular piece (magenta dot-dashed), for $\lambda = -2 + \frac{\sqrt{2}}{2}$, where the equations defining the envelope curve fail to have full rank.} 
\label{fig:StabDiag}
\end{figure}

\begin{figure}[h!]
\begin{center} 
\includegraphics{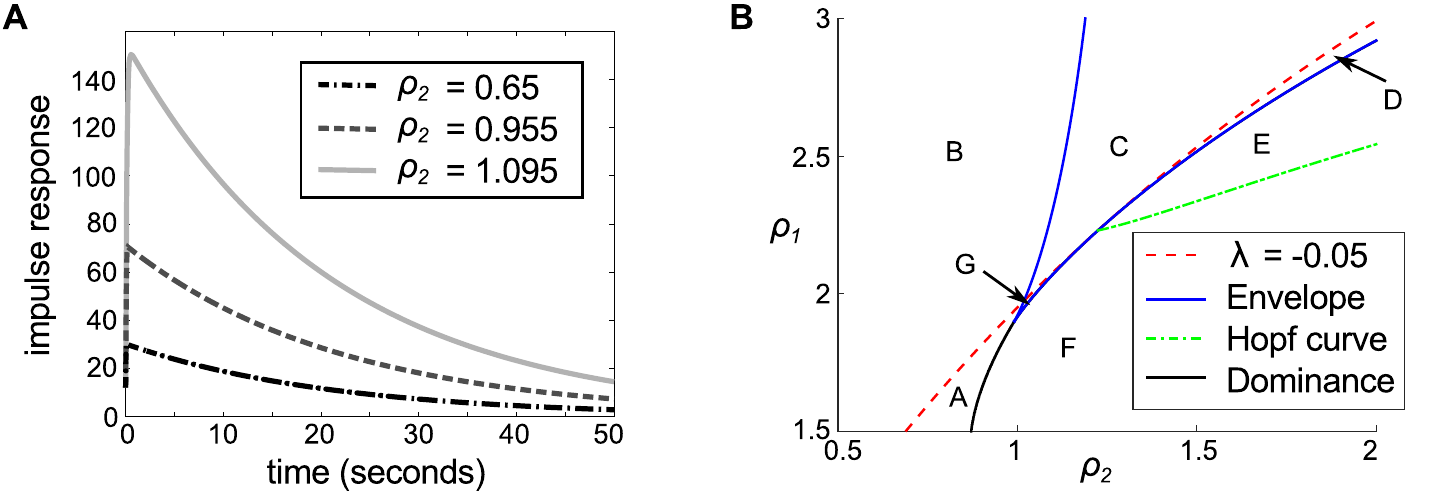}
\caption{(A) Impulse response of network as network attempts to increase gain
while maintaining $\lambda_{dominant}$. (B) Phase space of network showing normal operation. Letters indicate the relative positions of the \textit{three} most dominant eigenvalues. Region A: 1 real (dominant), 2 complex in the LHP. Region B: 1 real (dominant) in the RHP, 2 complex in the LHP. Region C: 1 real (dominant) in the RHP, 2 real in the LHP. Region D: 2 real (dominant) in the RHP, 1 real in the LHP. Region E: 2 complex (dominant) RHP, 1 real LHP. Region F: 2 complex (dominant) LHP, 1 real LHP. Region G: 3 real LHP.  We note that $\lambda_{dominant}$ is 
real unstable in B,C,D; complex unstable in E; complex stable in F; 
real stable in A,G.} \label{fig:norm_imp}
\end{center} 
\end{figure} 

\begin{figure}[h!]
\begin{center} 
\includegraphics{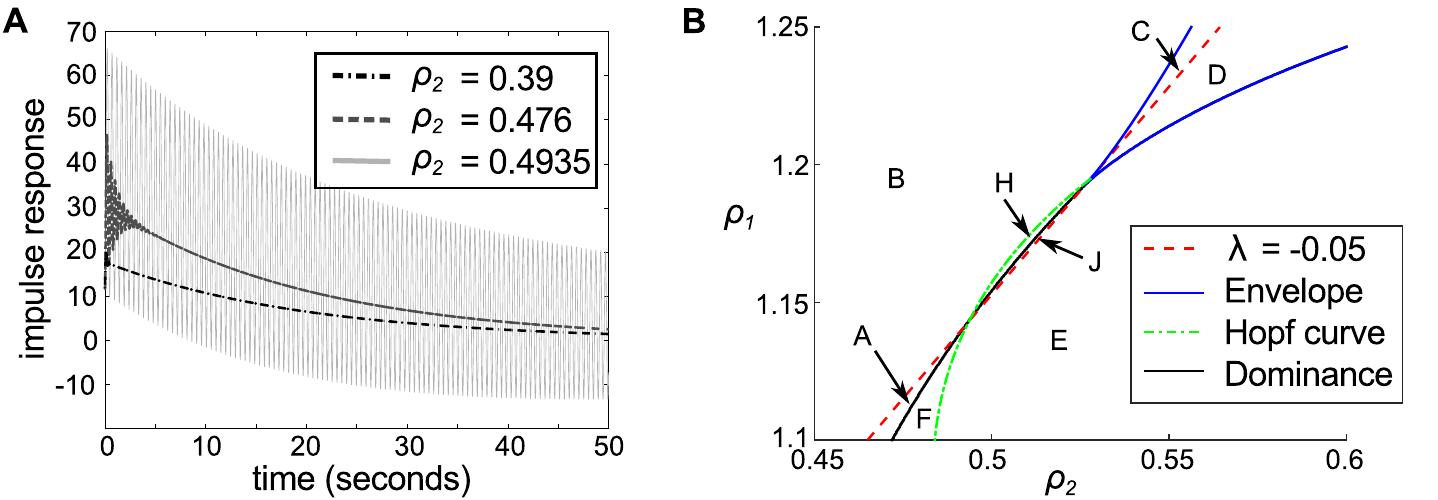}
\caption{(A) Impulse response of network as network attempts to increase gain
while maintaining $\lambda_{dominant}$. (B) Phase space of network showing pendular nystagmus. 
Letters indicate the relative positions of the \textit{three} most dominant eigenvalues; labels are as in Fig. \ref{fig:norm_imp}, with the addition of new regions ``H" and ``J".  Region A: 1 real (dominant), 2 complex in the LHP. Region B: 1 real (dominant) in the RHP, 2 complex in the LHP. Region C: 1 real (dominant) in the RHP, 2 real in the LHP. Region D: 2 real (dominant) in the RHP, 1 real in the LHP. Region E: 2 complex (dominant) RHP, 1 real LHP. Region F: 2 complex (dominant) LHP, 1 real LHP. Region H: 1 real (dominant) and 2 complex in the RHP. Region J: 2 complex (dominant) and 1 real in the RHP. (There is no Region G: 3 real LHP here).  We note that $\lambda_{dominant}$ is 
real unstable in B,C,D,H; complex unstable in E,J; complex stable in F; 
real stable in A.
} \label{fig:cn_imp}
\end{center} 
\end{figure} 

\begin{figure}[h!]
\begin{center}
\includegraphics{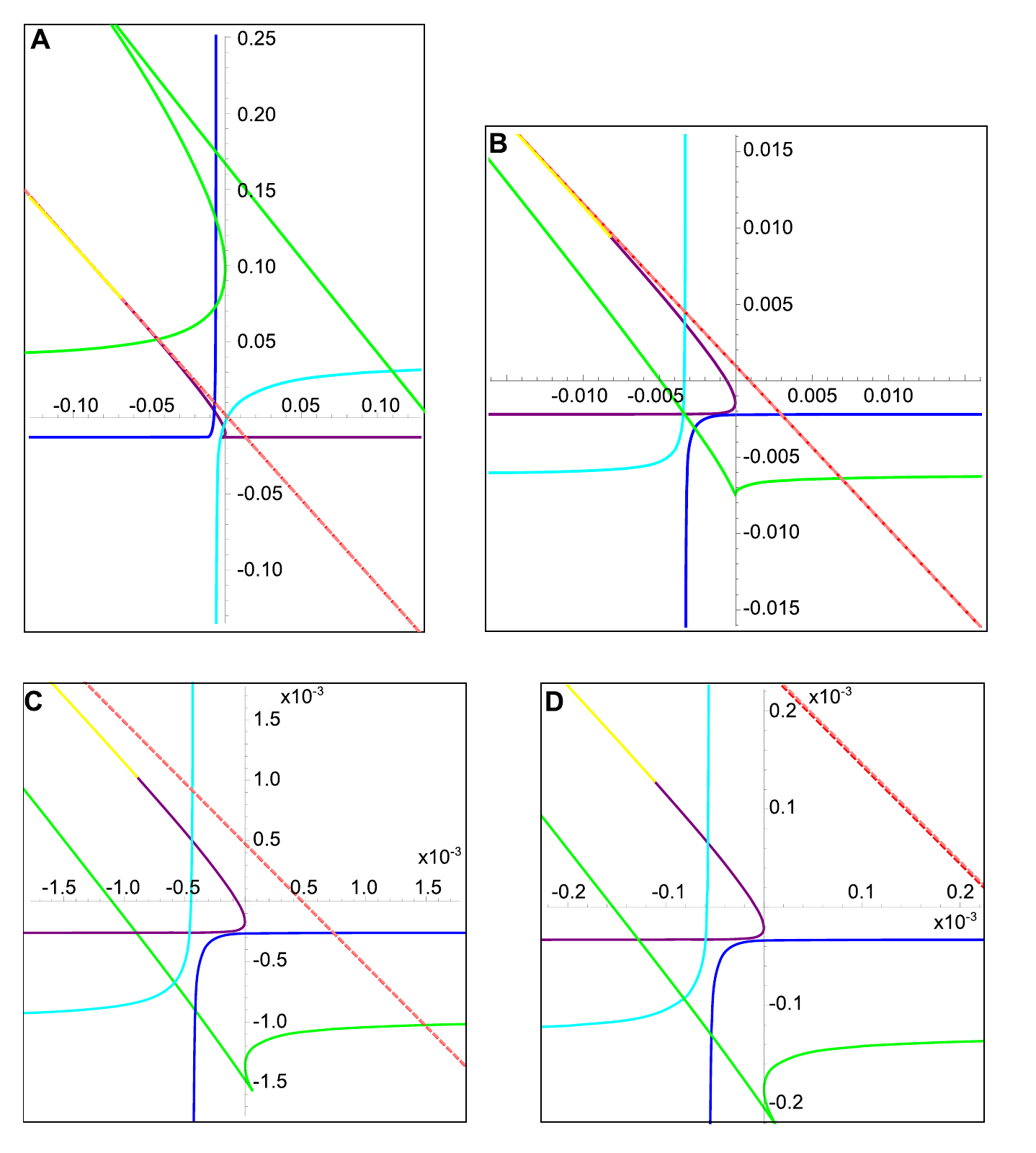}
\caption{Phase planes for the continuum oculomotor integrator model (Eqn. (\ref{eqn:psi_cont_def_match}--\ref{eqn:P1_cont_def_match})), for several values of $N$. Four pieces of the $\lambda < 0$ bifurcation curve are shown: $(0, 4\pi)$ (purple), $(4\pi, 6 \pi)$ (blue), $(6\pi, 8\pi)$ (cyan), $(8\pi, 12\pi)$ (green); $\lambda > 0$ curve (yellow).  Constant eigenvalue curves $\lambda = -0.05/200$ (red dashed) and $\lambda = 0$ (pink dashed) are visually indistinguishable. (A) $N= 12$. (B) $N = 24$. (C) $N=50$. (D) $N=100$.} \label{fig:continuous_envelope_finiteN} 
\end{center}
\end{figure}  

\end{document}